\theoremstyle{plain}
\newtheorem{thm}{Theorem}[section]
\newtheorem{cor}[thm]{Corolary}
\newtheorem{lem}[thm]{Lemma}
\newtheorem{prop}[thm]{Proposition}
\newtheorem{rmk}[thm]{Remark}
\theoremstyle{definition}
\newtheorem{df}[thm]{Definition}
\theoremstyle{remark}
\DeclareMathOperator{\Emb}{Emb}
\DeclareMathOperator{\diam}{diam}
\DeclareMathOperator{\clos}{clos}
\DeclareMathOperator{\dist}{dist}
\newcommand{\per}{\hbox{Per}}
\newcommand{\R}{\mathbb R}
\newcommand{\Z}{\mathbb Z}
\renewcommand{\epsilon}{\varepsilon}
\begin{document}

\author[A. Artigue]{Alfonso Artigue}
\email{artigue@unorte.edu.uy}
\address{DMEL, Universidad de la Rep\'ublica, Uruguay}
\title{Robustly N-expansive surface diffeomorphisms}
\date{\today}
\keywords{Expansive diffeomorphism, Axiom A, Hyperbolic set, Quasi-Anosov diffeomorphism, Robust expansivity}

\begin{abstract}
We give sufficient conditions for a diffeomorphism of a compact surface 
to be robustly $N$-expansive and cw-expansive in the $C^r$-topology. 
We give examples on the genus two surface showing that they need not to be Anosov diffeomorphisms.
The examples are axiom A diffeomorphisms with tangencies at wandering points.
\end{abstract}
\maketitle

\section{Introduction}
Let $M$ be a smooth compact manifold without boundary and 
consider a $C^1$-diffeomorphism $f\colon M\to M$. 
We say that $f$ is \emph{expansive} if 
there is a positive constant $\delta$ such that 
if $x,y\in M$ and $x\neq y$ then there is $n\in\Z$ such that 
$\dist(f^n(x)f^n(y))>\delta$, where $\dist$ is a metric induced by a Finsler $\|\cdot\|$ on the tangent bundle $TM$. 
We say that $f$ is $C^r$-\emph{robustly expansive} if 
it is in the interior of the set of expansive $C^r$-diffeomorphisms. 
In \cite{Ma75} Mañé proved that $f$ is $C^1$-robustly expansive 
if and only if it a \emph{quasi-Anosov} diffeomorphism, i.e.,  
for every tangent vector $v\in TM$, $v\neq 0$, 
the set $\{\|df^n(v)\|\}_{n\in\Z}$ is unbounded.
Also, he proved that $f$ is quasi-Anosov if and only if it 
satisfies Smale's Axiom A and the \emph{quasi-transversality} condition of stable and unstable manifolds: 
$T_xW^s(x)\cap T_xW^u(x)=0$ for all $x\in M$. 
If $M$ is a compact surface then every quasi-Anosov diffeomorphism is Anosov. 
In higher dimensional manifolds there are examples of quasi-Anosov diffeomorphisms not being Anosov, 
see for example \cite{FR}. 
Obviously, every quasi-Anosov $C^r$-diffeomorphism is $C^r$-robustly expansive. 
To our best knowledge it is unknown whether the converse is true for $r\geq 2$.

The results of \cite{Ma75} were extended in several directions. 
In \cite{ArL} Lipschits perturbations of expansive homeomorphisms 
with respect to a hyperbolic metric were considered. 
There it is shown that quasi-Anosov diffeomorphisms are robustly expansive even allowing Lipschitz perturbations.
In \cite{MSS} it is shown that a vector field is $C^1$-robustly expansive 
in the sense of Bowen and Walters \cite{BW} if and only if it is a quasi-Anosov vector field. 
In \cite{ArK} this result is proved for kinematic expansive flows. 
For vector fields with singular (equilibrium) points 
Komuro \cite{K} introduced a definition called $k^*$-expansivity. 
He proved that the Lorenz attractor is $k^*$-expansive, consequently, 
we have a robustly $k^*$-expansive attractor. 
The question of determining whether a compact boundaryless three-dimensional manifold 
admits a $C^1$-robustly $k^*$-expansive vector field with singular points
seems to be an open problem. 

In the discrete-time case the definition of expansivity has several variations. 
Let us start mentioning the weakest one that will be considered in this paper.
We say that $f\colon M\to M$ is a \emph{cw-expansive} diffeomorphism \cite{Ka1}
if there is $\delta>0$ such that if $C\subset M$ is a non-trivial (not a singleton) connected 
set then there is $n\in\Z$ such that $\diam(f^n(C))>\delta$, where $\diam(C)=\sup_{x,y\in C}\dist(x,y)$.
In \cite{Sa97} it is proved that every $C^1$-robustly cw-expansive diffeomorphism is quasi-Anosov. 
In this paper, Section \ref{secEx}, we show that there are $C^2$-robustly cw-expansive 
surface diffeomorphisms that are not quasi-Anosov. For this purpose we introduce 
the notion $Q^r$-Anosov diffeomorphism. 
The idea is to control the order of the tangencies of stable and unstable manifolds as will be explained 
in Section \ref{secQrA}.

In \cites{Mo1,Mo2} (see also \cite{MoSi}) Morales introduced other forms of expansivity 
that will be explained now. 
The first one saids that $f$ is $N$-\emph{expansive} \cite{Mo1}, for a positive integer $N$, 
if there is $\delta>0$ such that if $\diam(f^n(A))<\delta$ for all $n\in\Z$ and some $A\subset M$ 
then $|A|\leq N$, where $|A|$ stands for the cardinality of $A$. 
In this case we say that $\delta$ is an $N$-\emph{expansivity constant}.
In \cite{LZ} examples are given on compact metric spaces 
showing that $N+1$-expansivity does not imply $N$-expansivity 
for all $N\geq 1$. This extends previous results of \cite{Mo1}. 
The examples of Section \ref{secEx} of the present paper are Axiom A diffeomorphisms 
of a compact surface exhibiting this phenomenon. 

From a \emph{probabilistic} viewpoint expansivity can be defined as follows.
For $\delta>0$, $x\in M$  and a diffeomorphism $f\colon M\to M$ consider the set
\[
 \Gamma_\delta(x)=\{y\in M:\dist(f^n(x),f^n(y))\leq\delta\hbox{ for all }n\in\Z\}.
\]
Given a Borel probability measure $\mu$ on
$M$ we say that $f$ is $\mu$-\emph{expansive} \cite{Mo2} if there is $\delta>0$
such that for all $x\in M$ it holds that $\mu(\Gamma_\delta(x))=0$.
We say that $f$ is
\emph{measure-expansive} if it is $\mu$-expansive for every
non-atomic Borel probability measure $\mu$. 
Recall that $\mu$ is non-atomic if $\mu(\{x\})=0$ for all $x\in M$. 
Also, we say that $f$ is \emph{countably-expansive} if there is $\delta>0$ such that for all
$x\in M$ the set $\Gamma_\delta(x)$ is countable.
In \cite{AD} it is shown that, in the general context of compact metric spaces,
countably-expansivity is equivalent to
measure-expansivity.
In Table \ref{tableExp} we summarize these definitions. 
The implications indicated by the arrows are easy to prove. 
\begin{table}[h]
\label{tabla}
\[
\begin{array}{c}
\hbox{expansive} \Leftrightarrow  \hbox{1-expansive} \\
\Downarrow\\ 
\hbox{2-expansive}\\
\Downarrow \\
\hbox{3-expansive}\\
\Downarrow \\
\hbox{\dots}\\
\Downarrow \\
\hbox{N-expansive}\\
\Downarrow \\
\hbox{countably-expansive}\Leftrightarrow\hbox{measure-expansive}\\
\Downarrow \\
\hbox{cw-expansive}\\
\end{array}
\]
\caption{Hierarchy of some generalizations of expansivity.}
\label{tableExp}
\end{table}

As we said before, in \cite{Sa97} it is shown that $C^1$-robustly cw-expansive diffeomorphisms are quasi-Anosov. 
Therefore, in the $C^1$-category, all the definitions of Table \ref{tableExp} coincide in the robust sense.
The purpose of the present paper is to investigate robust expansivity and its generalizations 
in the $C^r$-topology for $r\geq 2$. 
Let us explain our results while describing the contents of the article.
In Section \ref{secOmegaExp} we recall some known results while introducing the notion of $\Omega$-\emph{expansivity}, 
i.e., expansivity in the non-wandering set. 
We prove that 
$C^1$-robust $\Omega$-expansivity is equivalent with $\Omega$-stability.
In Section \ref{secQrA} we introduce $Q^r$-Anosov $C^r$-diffeomorphisms of compact surfaces. 
In Corollary \ref{corQrA} we show that $Q^r$-Anosov diffeomorphisms are $C^r$-robustly $r$-expansive (i.e., $N$-expansive with $N=r$).
In Section \ref{secPer} we investigate the converse of Corollary \ref{corQrA}.
We show that every periodic point of a $C^r$-robustly 
cw-expansive diffeomorphism on a compact surface is hyperbolic. 
Also, we prove that if $f$ is an Axiom A diffeomorphism without cycles and $C^r$-robustly 
cw-expansive then $f$ is $Q^r$-Anosov.
In Section \ref{secEx} we prove that for each $r\geq 2$ there is a $C^r$-robustly $r$-expansive 
surface diffeomorphism that is not $(r-1)$-expansive.

I am in debt with J. Brum and R. Potrie for conversations related with the examples presented in Section \ref{secEx}.
Some of the results of this paper are part of my Thesis made under the guidance of M. J. Pacifico and J. L. Vieitez. 

\section{Omega-expansivity}
\label{secOmegaExp}
Let $M$ be a smooth compact manifold without boundary. 
In this section the dimension of $M$ will be assumed to be greater than one.
Given a $C^1$-diffeomorphism $f\colon M\to M$ define 
$\per(f)$ as the set of periodic points of $f$ and the non-wandering set 
$\Omega(f)$ as the set of those $x\in M$ satisfying: 
for all $\epsilon>0$ there is $n\geq 1$ such that $B_\epsilon(x)\cap f^n(B_\epsilon(x))\neq\emptyset$.
Recall that $f$ satisfies Smale's \emph{Axiom A} 
if $\clos(\per(f))=\Omega(f)$ and $\Omega(f)$ is hyperbolic. 
A compact invariant set $\Lambda\subset M$ is \emph{hyperbolic} 
if 
the tangent bundle over $\Lambda$ 
splits as $T_{\Lambda}M=E^s\oplus E^u$ 
the sum of two sub-bundles invariant by $df$
and there are $c>0$ and $\lambda\in (0,1)$ such that:
\begin{enumerate}
 \item if $v\in E^s$ then $\|df^n(v)\|\leq c\lambda^n \|v\|$ for all $n\geq 0$ and 
 \item if $v\in E^u$ then $\|df^n(v)\|\leq c\lambda^n \|v\|$ for all $n\leq 0$.
\end{enumerate}

\begin{df}
\index{homeomorphism!$\Omega$-expansive} We say that $f$ is $\Omega$-\emph{expansive} if $f\colon \Omega(f)\to\Omega(f)$ is expansive. 
 We say that $f$ is $C^r$-\emph{robustly} $\Omega$-\emph{expansive} if there is a 
 $C^r$-neighborhood $U$ of $f$ such that every $g\in U$ is $\Omega$-expansive.
\end{df}

\begin{rmk}
Trivially, if $\Omega(f)$ is a finite set then $f$ is $\Omega$-expansive. 
Also, if $\Omega(f)$ is a hyperbolic set then $f$ is $\Omega$-expansive. 
\end{rmk}

A $C^1$-diffeomorphism $f\colon M\to M$ is $\Omega$-\emph{stable} if there is 
a $C^1$-neighborhood $U$ of $f$ such that 
for all $g\in U$ there is a homeomorphism $h\colon \Omega(f)\to\Omega(g)$ 
such that $h\circ f=g\circ h$. 
We say that $f$ is a $C^r$-\emph{star diffeomorphism} if there is a $C^r$-neighborhood $U$ of $f$ such that 
every periodic orbit of every $g\in U$ is a hyperbolic set.
If $f$ satisfies the axiom A then $\Omega(f)$ decomposes into a finite 
disjoint union basic sets $\Omega(f)=\Lambda_1\cup\dots\cup\Lambda_l$. 
A collection $\Lambda_{i_1},\dots,\Lambda_{i_k}$ is called a \emph{cycle} 
if there exist points $a_j\notin\Omega(f)$, for $j=1,\dots,k$, 
such that $\alpha(a_j)\subset \Lambda_{i_j}$ 
and $\omega(a_j)\subset \Lambda_{i_{j+1}}$ (with $k+1\equiv 1$). 
We say that $f$ \emph{has not cycles} (or satisfies the no cycle condition)
if there are not cycles among the basic sets of $\Omega(f)$. 
See for example \cites{Robinson,Sh87} for more on this subject.

%

From \cites{Sm70,Fr71,Ao92,Ha} we know that the following statements are equivalent in the $C^1$-topology:
%
 \begin{enumerate}
  \item $f$ satisfies axiom A and has not cycles,
  \item $f$ is $\Omega$-stable,
  \item $f$ is a star diffeomorphism.
 \end{enumerate}
%
%
%
%



We add another equivalent statement, with a \emph{simple} proof based on deep results, 
that will be used in the next sections.

\begin{prop}
\label{propOmExp}
A diffeomorphism is $C^1$-robustly $\Omega$-expansive if and only if it is $\Omega$-stable.
\end{prop}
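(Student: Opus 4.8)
The plan is to prove the two implications separately, using the established equivalence (axiom A without cycles $\Leftrightarrow$ $\Omega$-stable $\Leftrightarrow$ star) as the backbone on both sides.

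For the easy direction, suppose $f$ is $\Omega$-stable. Then there is a $C^1$-neighborhood $U$ of $f$ with the conjugacy property; moreover, by the cited equivalence, every $g \in U$ is itself $\Omega$-stable (one can shrink $U$ so this holds, since $\Omega$-stability is a $C^1$-open condition, being equivalent to the star property). In particular, for each such $g$ the set $\Omega(g)$ is hyperbolic — here I would invoke that axiom A, which $g$ satisfies, means precisely that $\Omega(g)$ is hyperbolic. By the Remark already in the text, a diffeomorphism whose non-wandering set is hyperbolic is $\Omega$-expansive. Hence every $g \in U$ is $\Omega$-expansive, i.e. $f$ is $C^1$-robustly $\Omega$-expansive.

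For the converse, suppose $f$ is $C^1$-robustly $\Omega$-expansive, with $C^1$-neighborhood $U$ witnessing this. By the equivalence above it suffices to show $f$ is a star diffeomorphism, i.e. that $f$ (after possibly shrinking $U$) has the property that every periodic orbit of every $g$ near $f$ is hyperbolic. The strategy is the standard Mañé-type argument: if some $g$ arbitrarily $C^1$-close to $f$ had a non-hyperbolic periodic point $p$, then by a further $C^1$-small perturbation (Franks' lemma, adjusting the derivative along the orbit of $p$) one produces $g'$ still in $U$ having a periodic point whose derivative return map has an eigenvalue of modulus one — and then a further perturbation makes this an irrational rotation or a genuine identity block on a two-dimensional subspace, yielding in $\Omega(g')$ a small invariant curve (a ``periodic circle'' or a disk of periodic points) on which $g'$ cannot be expansive. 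This contradicts $g' \in U$ being $\Omega$-expansive, since such a curve consists of non-wandering points and has diameter that can be made smaller than any given expansivity constant while never being a singleton.

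The main obstacle is making the perturbation in that last paragraph genuinely land inside $\Omega(g')$: a non-hyperbolic periodic point is automatically in $\Omega(g')$, but after the Franks-type perturbation producing a small invariant arc or disk of recurrent behavior one must check these new points are non-wandering, which is where one uses that the perturbation is supported near the periodic orbit and the local dynamics is (conjugate to) a rotation or identity on a disk, so every point of the disk is periodic and hence non-wandering. I would cite \cites{Fr71,Ma75} for the Franks lemma and the realization of the eigenvalue-one perturbation; with that in hand the argument is short. The only other point to be careful about is that ``$\Omega$-stable'' and ``star'' are both $C^1$-open, so the neighborhoods can be chosen compatibly; this is immediate from the equivalence itself.
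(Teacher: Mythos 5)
Your proposal is correct and follows essentially the same route as the paper: the easy direction rests on hyperbolicity of $\Omega(g)$ (equivalently, the conjugacy from $\Omega$-stability) giving expansivity on the non-wandering set, and the hard direction uses a Franks-type perturbation of a non-hyperbolic periodic orbit to produce an arc of periodic points violating $\Omega$-expansivity, then invokes the star $\Leftrightarrow$ $\Omega$-stable equivalence. The only cosmetic difference is that the paper derives the contradiction from the countability of periodic points of an expansive homeomorphism rather than from a direct violation of the expansivity constant.
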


\begin{proof}
 In order to prove the direct part suppose that $f$ is $C^1$-robustly $\Omega$-expansive. 
 If a periodic point of $f$ is not hyperbolic then, by \cite{Fr71}*{Lemma 1.1}, 
 we find a small $C^1$-perturbation of $f$ with an arc of periodic points. 
 This contradicts the $C^1$-robust expansivity of $f$ 
 because every expansive homeomorphism of a compact metric space has at most a countable set of periodic points.
 This proves that $f$ is a star diffeomorphism.

 If $f$ is $\Omega$-stable then $f$ satisfies Smale's axiom A. 
 Therefore $\Omega(f)$ is hyperbolic and consequently $f\colon\Omega(f)\to\Omega(f)$ is expansive. 
 Since $f$ is $\Omega$-stable we have that $f$ is robustly $\Omega$-expansive. 
\end{proof}

%
%
%
%
%
%
%
%
%

\section{$Q^r$-Anosov diffeomorphisms}
\label{secQrA}
In this section we assume that $M=S$ is a compact surface, i.e., $\dim(M)=2$.
The \emph{stable set} of $x\in S$ is
\[
 W^s_f(x)=\{y\in S:\lim_{n\to+\infty}\dist(f^n(x),f^n(y))= 0\}.
\]
The \emph{unstable set} is defined by $W^u_f(x)=W^s_{f^{-1}}(x)$.
Assume that $f$ is $\Omega$-stable, $E^s, E^u$ are one-dimensional
and define $I=[-1,1]$.
We denote by $\Emb^r(I,S)$ the space of $C^r$-embeddings of $I$ in $S$ with the $C^r$-topology.
Let us recall the following fundamental result for future reference.

\begin{thm}[Stable manifold theorem]
\label{teoSMT}
 Let $\Lambda\subset S$ be a hyperbolic set of a 
 $C^r$-diffeomorphisms $f$ of a compact surface $S$. 
 Then, for all $x\in\Lambda$, $W^s_f(x)$ is an injectively immersed $C^r$-submanifold. 
 Also the map $x\mapsto W^s_f(x)$ is continuous: 
 there is a continuous function $\Phi\colon\Lambda\to\Emb^r(I,S)$ 
 such that for each
 $x\in\Lambda$ it holds that the image of $\Phi(x)$ is a neighborhood of $x$ in $W^s_f(x)$. 
 Finally, these stable manifolds also depend continuously on the diffeomorphisms $f$, 
 in the sense that nearby diffeomorphisms yield nearby mappings $\Phi$.
 \end{thm}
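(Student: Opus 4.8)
The plan is to prove this by the classical Hadamard graph transform method, using that on a surface the bundles $E^s$ and $E^u$ are one-dimensional.

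First I would replace the given Finsler by an adapted one (a Mather metric) on a neighbourhood of $\Lambda$, so that in the definition of hyperbolic set the constant $c$ can be taken equal to $1$: $\|df(v)\|\le\lambda\|v\|$ for $v\in E^s$ and $\|df^{-1}(v)\|\le\lambda\|v\|$ for $v\in E^u$, with a slightly enlarged $\lambda<1$. Using the continuity of the splitting $T_\Lambda S=E^s\oplus E^u$, I would extend $E^s$ and $E^u$ to continuous (not necessarily invariant) line fields on a neighbourhood $U\supset\Lambda$, and via exponential charts $\exp_x\colon B(0,\rho)\subset T_xS\to S$ write $f$ in local coordinates as the family $f_x=\exp_{f(x)}^{-1}\circ f\circ\exp_x$, defined near $0$, whose differential at $0$ contracts $E^s_x$ and expands $E^u_x$. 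Uniform continuity of $df$ on the compact set $\Lambda$ gives a uniform modulus of continuity for $x\mapsto f_x$ near $0$, which is what will make the estimates below uniform in $x$.

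Next I would realise the local stable manifold as a fixed point. For $x\in\Lambda$ and a small $\beta>0$ let $\mathcal G_x$ be the space, complete in the $C^0$ distance, of $1$-Lipschitz graphs $\{(s,\varphi(s)):|s|\le\beta\}\subset E^s_x\oplus E^u_x$ over $E^s_x$ with $\varphi(0)=0$. Since $df^{-1}$ expands the $E^s$-direction and contracts the $E^u$-direction, sending a graph $G\in\mathcal G_x$ to the connected component through $0$ of $f_{f^{-1}(x)}^{-1}(G)$ defines, for $\beta$ small, a map $\Gamma_x\colon\mathcal G_x\to\mathcal G_{f^{-1}(x)}$, and the family $(\Gamma_x)_{x\in\Lambda}$ is a uniform contraction of the bundle $\bigsqcup_{x}\mathcal G_x$ covering $f^{-1}$. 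The contraction principle applied to the induced operator on continuous sections yields a unique invariant section $x\mapsto\varphi_x$, and one checks in the usual way that $W^s_{\mathrm{loc}}(x):=\exp_x(\mathrm{graph}\,\varphi_x)$ is exactly the set of points whose forward orbit remains $\beta$-close to, and is asymptotic to, that of $x$; hence it is a neighbourhood of $x$ in $W^s_f(x)$.

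For the $C^r$ statement I would invoke the fiber contraction theorem: differentiating the graph transform formally produces bundle maps over $\Gamma_x$ acting on the $1$-jets, then the $2$-jets, and so on up to the $r$-jets of the $\varphi_x$'s, each of which is again a fiber contraction — the domination inequalities that guarantee this hold automatically, since $E^s$ is one-dimensional and is the most strongly contracted direction, so no bunching hypothesis is required. The fixed points of these towers are precisely the derivatives of $\varphi_x$ up to order $r$, so $W^s_{\mathrm{loc}}(x)\in\Emb^r(I,S)$ after rescaling $[-\beta,\beta]$ to $I$; this defines $\Phi$. Continuity of $\Phi$ in $x$ and in $f$ is then the standard fact that the fixed point of a uniform contraction depends continuously on parameters, applied to $(\Gamma_x)$ together with its jet lifts, which depend continuously on $x$ (via the modulus of continuity above) and on $f$ (in the $C^r$ topology). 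Finally I would globalize by $W^s_f(x)=\bigcup_{n\ge0}f^{-n}\bigl(W^s_{\mathrm{loc}}(f^n(x))\bigr)$, an increasing union of $C^r$ embedded arcs, hence an injectively immersed $C^r$ submanifold.

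The main obstacle is the $C^r$ step: one must set up the tower of jet-bundle maps carefully, check that each level is genuinely a fiber contraction depending continuously on $(x,f)$, and track the estimates so that the resulting $\Phi$ lands continuously in $\Emb^r(I,S)$ and not merely in $\Emb^{r-1}(I,S)$. The rest is a routine packaging of the contraction mapping principle.
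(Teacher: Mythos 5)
Your outline is a correct rendition of the classical graph-transform/fiber-contraction proof, which is precisely the argument the paper defers to: its ``proof'' is only the citation to Palis--Takens, Appendix 1, where this construction is carried out. One tiny remark: the absence of a bunching hypothesis comes from $E^s$ being contracted while $E^u$ is expanded (so the relevant ratio is automatically $<1$ at every jet level), not from the one-dimensionality of $E^s$; otherwise the sketch matches the standard proof.
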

\begin{proof}
 See  \cite{PaTa} Appendix 1.
\end{proof}

Given $x\in S$ we can take 
$\delta_1,\delta_2>0$, a $C^r$-coordinate chart 
$\varphi\colon U\subset S\to [-\delta_1,\delta_1]\times[\delta_2,\delta_2]$ 
such that $\varphi(x)=(0,0)$ and 
two $C^r$ functions $g^s,g^u\colon [-\delta_1,\delta_1]\to[\delta_2,\delta_2]$ such that 
the graph of $g^s$ and $g^u$ are the local 
expressions of the local stable and the local unstable manifold of $x$, respectively. 
If the degree $r$ Taylor polynomials of $g^s$ and $g^u$ at $0$ 
coincide we say that there is an $r$-\emph{tangency} at $x$.


\begin{df}
Given $r\geq 1$, a $C^r$-diffeomorphisms $f\colon S\to S$ is \emph{$Q^r$-Anosov} 
if it is 
axiom A, has no cycles and
there are no $r$-tangencies.
%
\end{df}

\begin{rmk}
For $r=1$ we have that $Q^1$-Anosov is quasi-Anosov, and in fact, given that $S$ is two-dimensional, it is Anosov. 
 For $r=2$ we are requiring that if there is a tangency of a stable and an unstable manifold it is a quadratic one.
\end{rmk}

We will show that $Q^r$-Anosov diffeomorphisms form an open set of $N$-expansive diffeomorphisms. 
Several results from \cite{Sh87} will be used.
%
%
%
For $\delta>0$ define
\[
  \begin{array}{l}
    W^s_\delta(x,f)=\{y\in S:\dist(f^n(x),f^n(y))\leq\delta\,\,\forall n\geq 0\},\\
    W^u_\delta(x,f)=\{y\in S:\dist(f^n(x),f^n(y))\leq\delta\,\,\forall n\leq 0\}.
  \end{array}
\]
\begin{thm}
\label{thmQrAbierto}
 In the $C^r$-topology 
 the set of $Q^r$-Anosov diffeomorphisms of a compact surface 
 is a $C^r$-open set. 
\end{thm}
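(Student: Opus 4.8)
The plan is to show that each of the three defining conditions of $Q^r$-Anosov is preserved under small $C^r$-perturbations. The first two---Axiom A and the no-cycle property---together constitute $\Omega$-stability, which by the equivalences recalled above (equivalently, by Proposition \ref{propOmExp}) is a $C^1$-open, hence $C^r$-open, condition; under an $\Omega$-stable perturbation the basic sets $\Lambda_1,\dots,\Lambda_l$ move continuously and the no-cycle condition persists. So the real content is the openness of the ``no $r$-tangency'' condition, i.e., one must rule out that an arbitrarily small perturbation of a $Q^r$-Anosov $f$ creates a point where the degree-$r$ Taylor polynomials of the local stable and unstable graphs coincide.

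The key steps I would carry out are as follows. First, fix $f$ and a small $C^r$-neighborhood $U$ on which every $g\in U$ is $\Omega$-stable with basic sets $\Lambda_i(g)$ depending continuously on $g$. Second, use the Stable Manifold Theorem \ref{teoSMT}: for $g\in U$ and $x\in\Omega(g)$ there are local stable and unstable manifolds varying continuously with $(g,x)$ in $\Emb^r(I,S)$, so in suitable $C^r$-charts one gets functions $g^s_{g,x}$, $g^u_{g,x}$ whose degree-$r$ jets at the base point depend continuously on $(g,x)$. Third, to the pair $(g,x)$ associate the vector of differences of Taylor coefficients
\[
 T(g,x)=\bigl(\, \partial^k_t\,(g^s_{g,x}-g^u_{g,x})(0)\,\bigr)_{k=0}^{r}\ \in\ \R^{r+1},
\]
a continuous function on the compact set $\{(g,x):g\in U,\ x\in\Omega(g)\}$ (compactness here uses that $\Omega(g)$ varies upper-semicontinuously and is contained in a fixed compact region). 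Since $f$ has no $r$-tangency, $T(f,x)\neq 0$ for all $x\in\Omega(f)$, so $\|T(f,x)\|$ is bounded below by some $c>0$; by continuity and compactness, shrinking $U$ we get $\|T(g,x)\|\geq c/2$ for all $g\in U$ and all $x\in\Omega(g)$, which says precisely that no $g\in U$ has an $r$-tangency. Intersecting with the $\Omega$-stability neighborhood gives the desired $C^r$-open neighborhood of $f$ inside the $Q^r$-Anosov set.

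The main obstacle is the third step: making rigorous the continuity, in the $C^r$-embedding topology \emph{jointly in $(g,x)$}, of the local stable/unstable manifolds and hence of their $r$-jets, and handling the fact that the domain $\Omega(g)$ itself changes with $g$. The content of Theorem \ref{teoSMT} gives continuity in $x$ for fixed $g$ and continuity in $g$ separately; one must patch these into joint continuity over the ``moving'' parameter space, e.g.\ by covering $\Omega(f)$ by finitely many charts, using the continuation/conjugacy $h_g\colon\Omega(f)\to\Omega(g)$ coming from $\Omega$-stability to pull everything back to the fixed index set $\Omega(f)$, and then applying a uniform version of the stable manifold estimates from \cite{PaTa}. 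One also must check that choosing the comparison charts can be done continuously (or, more simply, fix one chart per point of a finite cover of $\Omega(f)$ and note that whether an $r$-tangency exists is chart-independent, so it suffices to test the jet condition in these fixed charts). Once joint continuity of the $r$-jets is in hand, the compactness argument closing the proof is routine.
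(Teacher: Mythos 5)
There is a genuine gap, and it is located exactly where you place ``the real content'': your third step tests the no-$r$-tangency condition only at points $x\in\Omega(g)$, but $r$-tangencies never occur there. At a point of the hyperbolic set $\Omega(g)$ one has $T_xW^s(x)\oplus T_xW^u(x)=T_xS$, so the local stable and unstable graphs already differ at first order and your function $T(g,x)$ is automatically bounded away from zero on $\{(g,x):x\in\Omega(g)\}$ for trivial reasons. The tangencies that the definition of $Q^r$-Anosov is designed to control occur at \emph{wandering} points, where the global stable manifold $W^s(y)$ of some $y\in\Omega$ meets the global unstable manifold $W^u(z)$ of some (typically different) $z\in\Omega$; this is emphasized in the abstract (``tangencies at wandering points'') and is what happens in all the examples of Section \ref{secEx}. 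So the conclusion ``no $g\in U$ has an $r$-tangency'' does not follow from your estimate: you have said nothing about points off $\Omega(g)$.

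Repairing this is not routine, because off $\Omega(g)$ the relevant invariant manifolds through a point $x$ are long iterates of local ones, and their $r$-jets at $x$ do not vary continuously over all of $S$ in any naive sense (the number of iterates needed blows up as $x$ approaches $\Omega$). The paper's proof handles this with two ingredients you would need to add. First, an $r$-tangency at $x$ propagates to an $r$-tangency at every point of the orbit of $x$, and by uniform local maximality of $\Omega$ (Propositions 8.11 and 8.22 of \cite{Sh87}, valid on a whole $C^r$-neighborhood) every wandering orbit visits the fixed compact set $K=S\setminus U_1$ disjoint from $\Omega(g)$ for all nearby $g$; so one may assume all putative tangency points lie in $K$. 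Second, for points of $K$ one writes $x_k\in g_k^{-m}(W^s_\delta(y_k,g_k))\cap g_k^{m}(W^u_\delta(z_k,g_k))$ with $y_k,z_k\in\Omega(g_k)$ and a \emph{uniform} $m$ (using compactness of $K$ and its distance to $\Omega$), after which Theorem \ref{teoSMT} does give $C^r$-convergence of these finitely-iterated local manifolds and a limiting $r$-tangency for $f$, the desired contradiction. Your outline contains neither the reduction to $K$ nor the uniform bound on the number of iterates, and without them the compactness/continuity argument cannot be closed.
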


\begin{proof}
We know that the set of axiom A $C^r$-diffeomorphisms without cycles 
form an open set $U$ in the $C^r$-topology.
Let $g_k$ be a sequence in $U$
converging to $f\in U$. 
Assume that $g_k$ is not $Q^r$-Anosov for all $k\geq 0$. 
In order to finish the proof is it sufficient to show that $f$ is not $Q^r$-Anosov. 
Since $g_k\in U$ and it is not $Q^r$-Anosov, there is $x_k\in S$ with an $r$-tangency for $g_k$. 

By \cite{Sh87}*{Proposition 8.11} we know that $\Omega(f)$ has a local product structure, 
then, we can apply \cite{Sh87}*{Proposition 8.22} to conclude that $\Omega(f)$ is uniformly locally maximal, 
that is, there are neighborhoods $U_1\subset S$ of $\Omega(f)$ and $U_2$ a $C^r$-neighborhood of $f$ 
such that $\Omega(g)=\cap_{n\in\Z}\,g^n(U_1)$ for all $g\in U_2$. 
Consider the compact set $K=S\setminus U_1$. 
We have that for all $x\notin \Omega(g)$, with $g\in U_2$, 
there is $n\in\Z$ such that $g^n(x)\in K$.
Notice that if $g_k$ has an $r$-tangency at $x_k$ then every point in its orbit 
by $g_k$ has an $r$-tangency too. 
Therefore we can assume that $x_k\in K$ for all $k\geq 1$. 
Since $K$ is compact we can suppose that $x_k\to x\in K$. 
By \cite{Sh87}*{Proposition 9.1} we can take 
$y_k,z_k\in \Omega(g_k)$ such that $x_k\in W^s_{g_k}(y_k)\cap W^u_{g_k}(z_k)$. 
Suppose that $y_k\to y$ and $z_k\to z$ with $y,z\in \Omega(f)$ \cite{Sh87}*{Theorem 8.3}.
By Theorem \ref{teoSMT} we know that 
for some $\delta>0$ the local manifolds $W^s_\delta(y_k,g_k)$ and $W^u_\delta(z_k,g_k)$ 
converges in the $C^r$-topology to $W^s_\delta(y,f)$ and $W^u_\delta(z,f)$ respectivelly. 
Since $K$ is compact and disjoint from $\Omega(f)$ there is $m>0$ such that $$x_k\in g_k^{-m}(W^s_\delta(y_k,g_k))\cap g_k^m(W^u_\delta(z_k,g_k))$$ 
for all $k\geq 1$. 
Then, taking limit $k\to \infty$ we find an $r$-tangency at $x$ for $f$. 
Therefore $f$ is not $Q^r$-Anosov. 
This proves that the set of $Q^r$-Anosov $C^r$-diffeomorphisms is an open set in the $C^r$-topology.
\end{proof}

\begin{df}
  We say that a $C^r$-diffeomorphism $f$ is $C^r$-\emph{robustly} $N$-expansive if 
 there is a $C^r$-neighborhood of $f$ such that every diffeomorphism in this neighborhood is $N$-expansive.
\end{df}


The following is an elementary result from Analysis.

\begin{lem}
\label{lemRolle}
 If $g\colon \R\to\R$ is a $C^r$ functions with $r+1$ roots in the interval 
 $[a,b]\subset\R$ then $g^{(n)}$ has $r+1-n$ roots in $[a,b]$ for all $n=1,2,\dots,r$ where $g^{(n)}$ stands for the $n^{th}$ derivative of $g$.
 \end{lem}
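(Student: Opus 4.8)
The plan is to prove the statement by induction on $n$, using the classical theorem of Rolle as the only ingredient. Throughout, "$g$ has $k$ roots in $[a,b]$" is read as "$g$ has at least $k$ distinct zeros in $[a,b]$", which is the only sensible reading since Rolle's theorem produces zeros, not multiplicities.

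For the base case $n=1$, list $r+1$ of the zeros of $g$ in increasing order as $a\le x_0<x_1<\dots<x_r\le b$. Since $g$ is $C^r$ with $r\ge 1$, it is differentiable on each closed subinterval $[x_{i-1},x_i]$ for $i=1,\dots,r$ and vanishes at both endpoints; Rolle's theorem then gives a point $\xi_i\in(x_{i-1},x_i)$ with $g'(\xi_i)=0$. The $r$ open intervals $(x_{i-1},x_i)$ are pairwise disjoint, so the points $\xi_1,\dots,\xi_r$ are pairwise distinct, giving $r=(r+1)-1$ roots of $g'$ in $[a,b]$, as claimed.

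For the inductive step, suppose $1\le n\le r-1$ and that $g^{(n)}$ has at least $r+1-n$ roots in $[a,b]$. Since $g$ is $C^r$ and $n+1\le r$, the function $g^{(n)}$ is $C^{r-n}$, hence in particular $C^1$, so the argument of the base case applied to $g^{(n)}$ in place of $g$ produces at least $(r+1-n)-1=r+1-(n+1)$ roots of $(g^{(n)})'=g^{(n+1)}$ in $[a,b]$. This completes the induction and establishes the conclusion for all $n=1,2,\dots,r$.

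There is essentially no obstacle here; the only point deserving a word of care is the bookkeeping of distinctness, namely that the zeros of $g^{(n+1)}$ produced at each stage lie in the disjoint open intervals determined by the zeros of $g^{(n)}$ obtained at the previous stage, so that the count is genuinely additive. Once this is made explicit the proof is immediate.
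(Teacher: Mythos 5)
Your proof is correct and follows exactly the approach the paper indicates, namely induction on $n$ with Rolle's theorem applied between consecutive distinct zeros at each stage; the paper leaves these details to the reader. Your explicit attention to the distinctness bookkeeping is a reasonable elaboration but does not change the argument.
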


\begin{proof}
 It follows by induction in $n$ using 
 the Rolle's theorem.
\end{proof}

Recall that $r$-expansivity means $N$-expansivity with $N=r$. 

\begin{thm}
\label{teoRobNexp}
Every $Q^r$-Anosov diffeomorphism of a compact surface is 
$r$-expansive. 
Moreover, if $f$ is $Q^r$-Anosov then there are a $C^r$ neighborhood $\mathbb{U}$ of $f$ and $\delta>0$ 
such that $\delta$ is an $r$-expansive constant for every $g\in \mathbb{U}$.
\end{thm}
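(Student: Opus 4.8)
The plan is to reduce $r$-expansivity to a bound on the number of intersection points of a local stable arc and a local unstable arc, and to extract that bound from the Rolle-type Lemma~\ref{lemRolle}. The first step is to upgrade the defining property of $Q^r$-Anosov, which only forbids $r$-tangencies at points of $\Omega(f)$, to the statement that there is no $r$-tangency at \emph{any} point of $S$. Since $f$ is axiom A without cycles, $S=\bigcup_iW^s_f(\Lambda_i)=\bigcup_jW^u_f(\Lambda_j)$ and every $x\in S$ lies on a unique stable leaf $W^s_f(x)$ and a unique unstable leaf $W^u_f(x)$, each an injectively immersed $C^r$ curve by Theorem~\ref{teoSMT}. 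The relation ``$W^s_f(x)$ and $W^u_f(x)$ have the same degree-$r$ jet at $x$'' is invariant under $f^{\pm1}$, since a $C^r$ diffeomorphism carries the degree-$r$ jet of a curve to that of its image; hence an $r$-tangency at some $x$ would persist along the whole orbit of $x$, and, taking $p\in\omega(x)\subset\Omega(f)$ with $f^{n_k}(x)\to p$, the continuity of the leaves in $\Emb^r(I,S)$ (again Theorem~\ref{teoSMT}) would force an $r$-tangency at $p\in\Omega(f)$, contradicting $Q^r$-Anosov. From now on I may assume that at every point where $W^s_f(x)$ meets $W^u_f(x)$ the order of contact is at most $r$.

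Second, I would localize. Fix $\delta>0$ small. Using that $\Omega(f)$ is uniformly locally maximal — this was already obtained from \cite{Sh87} in the proof of Theorem~\ref{thmQrAbierto} — together with Theorem~\ref{teoSMT}, for every $x\in S$ the set $W^s_\delta(x,f)$ is a compact $C^r$-subarc of $W^s_f(x)$ through $x$ and $W^u_\delta(x,f)$ a compact $C^r$-subarc of $W^u_f(x)$ through $x$, with $C^r$-size and length bounded uniformly in $x$, the whole picture being stable under small $C^r$-perturbations of $f$. Now let $A\subset S$ satisfy $\diam(f^n(A))<\delta$ for all $n\in\Z$ and fix $x_0\in A$. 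For each $y\in A$ and each $n\in\Z$ one checks directly that $f^n(y)\in W^s_\delta(f^nx_0,f)\cap W^u_\delta(f^nx_0,f)$; in particular $A\subset W^s_\delta(x_0,f)\cap W^u_\delta(x_0,f)$, so every point of $A$ lies on the single stable leaf $W^s_f(x_0)$ and the single unstable leaf $W^u_f(x_0)$. Thus $r$-expansivity reduces to the claim that, for $\delta$ small enough, $W^s_\delta(x_0,f)\cap W^u_\delta(x_0,f)$ has at most $r$ points for every $x_0\in S$.

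The core is this last claim, and it is where I expect the real difficulty. Suppose $W^s_\delta(x_0,f)$ and $W^u_\delta(x_0,f)$ met at $r+1$ distinct points $y_0=x_0,y_1,\dots,y_r$. Forward iteration contracts the stable arc, so $\len(f^n(W^s_\delta(x_0,f)))\to0$, while $f^n(x_0)$ accumulates on a basic set $\Lambda_i$; pass to a subsequence with $f^{n_k}(x_0)\to p\in\Lambda_i$. In a $C^r$ chart at $f^{n_k}(x_0)$ straightening $W^s_f(x_0)$ onto the first axis, $W^u_f(x_0)$ is the graph of a $C^r$ function $h_k$ vanishing at $0$, whose order of vanishing there equals the contact order of $W^s_f$ and $W^u_f$ at $x_0$ — an integer $k_0$ with $1\le k_0\le r$ by the first step, and independent of $k$ since $f$ preserves contact order — and the $r+1$ points $f^{n_k}(y_j)$ become zeros of $h_k$ lying in a window shrinking to $0$. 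Rescaling that window to unit size and renormalizing $h_k$ (this is possible because the uniform $C^r$-bounds on the local manifolds, together with the control of the contact coefficient along the orbit by the hyperbolic rates, keep the rescaled function in a compact family), one obtains in the limit a nonzero function of the form $c\,s^{k_0}$; but $c\,s^{k_0}$ has a single zero, whereas the $r+1$ rescaled zeros of $h_k$ survive in $[-1,1]$. Lemma~\ref{lemRolle} makes the contradiction quantitative: the $k_0$-th derivative of $h_k$ is eventually bounded away from $0$ on the unit window, so $h_k$ has at most $k_0\le r$ zeros there. This proves the claim, hence that $\delta$ is an $r$-expansivity constant for $f$.

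For the ``moreover'' part, every ingredient is $C^r$-robust: axiom A without cycles is a $C^r$-open condition; $\Omega(g)$ stays uniformly locally maximal with local product structure for $g$ in a $C^r$-neighborhood of $f$; the no-$r$-tangency condition is open by Theorem~\ref{thmQrAbierto}; and stable and unstable manifolds depend continuously on the diffeomorphism (Theorem~\ref{teoSMT}); so the same $\delta$ is an $r$-expansivity constant for every $g$ in a suitable $C^r$-neighborhood $\mathbb U$ of $f$. The main obstacle is the renormalization step inside the core claim — turning the merely pointwise information ``contact order at most $r$'', together with the uniform hyperbolic structure, into the global count ``at most $r$ intersection points'', and in particular guaranteeing that the rescaled limit of $h_k$ is genuinely nonzero with order of vanishing at most $r$. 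The other steps — propagating the no-$r$-tangency condition, reducing to a single pair of leaves, and the perturbation statement — are comparatively soft.
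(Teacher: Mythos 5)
Your overall strategy --- reduce $r$-expansivity to counting the intersections of a local stable arc with a local unstable arc and use Lemma \ref{lemRolle} to convert $r+1$ intersections into an $r$-tangency in a limit --- is the same as the paper's. But note first that your ``first step'' is based on a misreading: the definition of $Q^r$-Anosov already forbids $r$-tangencies at \emph{every} point of $S$ (on $\Omega(f)$ they are excluded automatically, since $T_xW^s(x)=E^s_x$ and $T_xW^u(x)=E^u_x$ are transverse on a hyperbolic set). Your propagation argument is therefore unnecessary, and as stated it is also false: if a tangency at a wandering point $x$ forced, via $C^r$-continuity of the leaves, a tangency at a point of $\omega(x)\subset\Omega(f)$, then no tangencies could exist at all and the examples of Section \ref{secEx} would be impossible. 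The point is that $W^u_\delta(f^{n_k}(x),f)=f^{n_k}(W^u_{\delta'}(x,f))$ does \emph{not} converge in $\Emb^r(I,S)$ to the local unstable manifold of the limit point $p$ when there is a tangency at $x$; Theorem \ref{teoSMT} only gives continuity for base points in the hyperbolic set.

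The genuine gap is in your core claim, precisely at the step you flag as the main obstacle. By iterating $n_k\to+\infty$ you place the limit configuration at a point of $\Omega(f)$, where the hypothesis ``no $r$-tangencies'' carries no information (leaves are transverse there); the entire content of the $Q^r$-Anosov condition lives in the wandering set. Moreover the proposed rescaling is neutral: the window containing the $r+1$ zeros shrinks at the stable rate, while the $j$-th derivative of the graph function $h_k$ (a graph over the \emph{contracting} direction) grows like $\lambda^{-jn_k}$ up to the unstable factor, so the ratios $h_k^{(j)}(0)L_k^{\,j}/h_k^{(k_0)}(0)L_k^{\,k_0}$ are essentially independent of $k$; after rescaling you recover the time-$0$ configuration, not $c\,s^{k_0}$, and no contradiction appears. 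The paper goes the other way: uniform local maximality of $\Omega(f)$ (from \cite{Sh87}) gives a compact set $K=S\setminus U_1$ disjoint from $\Omega(f)$ which every wandering orbit meets, and since the arcs in question are disjoint from $\Omega(f)$ and uniformly small, a \emph{bounded} number $m$ of iterates places them in $K$ as $g^{\pm m}$-images of local invariant manifolds of points of $\Omega(g)$, hence with uniform $C^r$-control depending continuously on the base point and on $g$ (Theorem \ref{teoSMT}). Compactness of $K$ plus Lemma \ref{lemRolle} then produces a point $x\in K$ at which the difference of the two graph functions vanishes together with its first $r$ derivatives, i.e.\ an $r$-tangency at a wandering point, contradicting the definition. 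Your proof needs this localization in the compact wandering region (equivalently, a uniform lower bound over $K$ on some derivative of order $\le r$ of the difference function); without it the argument does not close.
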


\begin{proof}
Let $f\colon S\to S$ be a $Q^r$-Anosov diffeomorphism. 
We can take a neighborhood $U$ of $\Omega(f)$, a $C^r$-neighborhood $\mathbb{U}'$ of $f$ 
and $\delta'>0$ such that:
\begin{enumerate}
\item $\Omega (g)$ is expansive with expansivity constant $\delta'$ for all $g\in\mathbb{U}'$ and
\item $\Omega(g)=\cap_{n\in\Z}\, g^n(U)$ for all $g\in\mathbb{U}'$.
\end{enumerate}
Therefore, we have to show that there is $\delta>0$ such that if $X\cap \Omega (f)=\emptyset$
and $\diam (f^n(X))<\delta$ for all $n\in \Z$ then $|X|\leq r$.
Arguing by contradiction assume that there are $g_n$ converging to $f$ in the $C^r$-topology and 
two sequences $s_n$ and $u_n$ of arcs in $S$ 
such that $s_n$ is stable for $g_n$, $u_n$ is unstable for $g_n$, $|s_n\cap u_n|>r$ and $\diam(s_n),\diam(u_n)\to 0$ as $n\to+\infty$. 
Considering the compact set $K$ of the proof of Theorem \ref{thmQrAbierto}
we can assume that $s_n,u_n\subset K$.
Take $x\in K$ an accumulation point of $s_n$. 
By Lemma \ref{lemRolle} and the arguments in the proof of Theorem \ref{thmQrAbierto} 
we have that there is an $r$-tangency at $x$ for $f$. 
This contradiction finishes the proof.
\end{proof}

From Theorems \ref{thmQrAbierto} and \ref{teoRobNexp} we deduce:

\begin{cor}
\label{corQrA}
Every $Q^r$-Anosov diffeomorphism of a compact surface is 
$C^r$-robustly $r$-expansive with uniform $r$-expansivity constant on a $C^r$-neighborhood.
\end{cor}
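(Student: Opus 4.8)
The plan is to read this off directly from the two preceding theorems, so essentially no new argument is needed. Fix a $Q^r$-Anosov diffeomorphism $f\colon S\to S$. By Theorem \ref{thmQrAbierto} the set of $Q^r$-Anosov $C^r$-diffeomorphisms is $C^r$-open, so there is a $C^r$-neighborhood $\mathbb{U}_1$ of $f$ all of whose elements are again $Q^r$-Anosov. By the ``moreover'' clause of Theorem \ref{teoRobNexp}, applied to $f$, there are a $C^r$-neighborhood $\mathbb{U}_2$ of $f$ and a number $\delta>0$ such that $\delta$ is an $r$-expansivity constant for every $g\in\mathbb{U}_2$; in particular every $g\in\mathbb{U}_2$ is $r$-expansive.

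Then I would take $\mathbb{U}=\mathbb{U}_1\cap\mathbb{U}_2$, which is again a $C^r$-neighborhood of $f$. Every $g\in\mathbb{U}$ is $Q^r$-Anosov and, lying in $\mathbb{U}_2$, is $r$-expansive with the single constant $\delta$. By the definition of $C^r$-robust $N$-expansivity this says exactly that $f$ is $C^r$-robustly $r$-expansive, and since the same $\delta$ works for every $g\in\mathbb{U}$ we get the uniformity asserted in the statement. (Strictly speaking $\mathbb{U}_2$ already suffices for robust $r$-expansivity with uniform constant; intersecting with $\mathbb{U}_1$ only records the harmless extra fact that the neighborhood can be chosen inside the $Q^r$-Anosov locus.)

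There is no real obstacle at the level of the corollary itself: all the substance has already been expended in Theorem \ref{teoRobNexp} and Theorem \ref{thmQrAbierto}. If one instead wanted to prove the statement from scratch, the one genuinely delicate point would be the compactness-and-limit step inside Theorem \ref{teoRobNexp}: one must show that a sequence of stable/unstable arc pairs $s_n,u_n$ for $g_n\to f$ with $|s_n\cap u_n|>r$ and $\diam(s_n),\diam(u_n)\to 0$ can, after iterating each arc into the fixed compact set $K=S\setminus U$ disjoint from $\Omega(f)$ and passing to subsequences, be made to converge in the $C^r$-topology (using the continuous dependence of local stable and unstable manifolds from Theorem \ref{teoSMT}) to a pair of local stable and unstable manifolds of $f$; then Lemma \ref{lemRolle} converts the surviving $r+1$ coincidences of the two graphs into the equality of degree-$r$ Taylor polynomials, i.e.\ an $r$-tangency for $f$, contradicting the $Q^r$-Anosov hypothesis. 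That limiting/accumulation argument is the crux, and it is exactly what the proofs of Theorems \ref{thmQrAbierto} and \ref{teoRobNexp} carry out.
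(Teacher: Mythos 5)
Your proposal is correct and matches the paper exactly: the paper derives the corollary directly from Theorems \ref{thmQrAbierto} and \ref{teoRobNexp}, just as you do by combining the openness of the $Q^r$-Anosov set with the uniform $r$-expansivity constant from the ``moreover'' clause. No gap here; the substance indeed lives in the two cited theorems.
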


\section{Robust cw-expansivity} 
\label{secPer}
In this section we will prove that if 
$f$ is $C^r$-robustly cw-expansive, $r\geq 1$, 
then its periodic points are hyperbolic.
It is a first step in the direction of proving the converse of Corollary \ref{corQrA} (in case that this converse is true).
A second step is done in Theorem \ref{thmPertCr} assuming that the diffeomorphism is Axiom A without cycles.

\begin{lem}
\label{lemPert}
Consider a $C^\infty$ manifold $M$ of dimension $n\geq 1$, $p\in M$ and $U\subset M$ 
a neighborhood of $p$. 
Then there are $\epsilon>0$ and a one-parameter family of $C^\infty$ diffeomorphisms 
$f_\mu\colon M\to M$, $|\mu-1|<\epsilon$, 
such that for all $\mu$: $f_\mu(p)=p$, $f_\mu(x)=x$ for all $x\in M\setminus U$, 
$d_pf_\mu=\mu Id$. 
Moreover, for all $r\geq 0$ the function $\mu\mapsto f_\mu$ is continuous in the $C^r$-topology.
\end{lem}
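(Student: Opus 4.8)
The plan is to construct $f_\mu$ explicitly as a local diffeomorphism supported in $U$, using a bump function to interpolate between a linear dilation near $p$ and the identity outside $U$. First I would choose local coordinates $\psi\colon V\to \R^n$ centered at $p$, with $\psi(p)=0$, such that $\psi(V)$ contains a closed ball $\bar B_R$ and $\psi^{-1}(\bar B_R)\subset U$. In these coordinates the problem becomes: find a family of diffeomorphisms of $\R^n$, equal to the identity outside $B_R$, fixing $0$, with derivative $\mu\,\mathrm{Id}$ at $0$, depending continuously on $\mu$ in every $C^r$-topology. Then $f_\mu$ is defined to be this family conjugated back by $\psi$ on $V$ and extended by the identity on $M\setminus V$; the extension is smooth because the family is the identity near $\partial\psi^{-1}(\bar B_R)$.

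The key step is the construction on $\R^n$. I would fix a smooth function $\rho\colon[0,\infty)\to[0,1]$ with $\rho\equiv 1$ on $[0,R/3]$ and $\rho\equiv 0$ on $[2R/3,\infty)$, and set
\[
 F_\mu(x) = x + (\mu-1)\,\rho(\|x\|)\,x.
\]
Then $F_\mu(0)=0$, $F_\mu$ is the identity for $\|x\|\geq 2R/3$, and a direct computation of the Jacobian at $0$ (where $\rho\equiv 1$ near $0$, so the term $\rho(\|x\|)x$ has derivative $\mathrm{Id}$ there) gives $D_0F_\mu=\mu\,\mathrm{Id}$. The map $\mu\mapsto F_\mu$ is plainly continuous — indeed affine — in $\mu$ for each $C^r$-norm on the fixed ball, since $F_\mu$ depends linearly on the parameter $\mu-1$. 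The remaining point is that $F_\mu$ is a diffeomorphism: I would compute $D_xF_\mu = \mathrm{Id} + (\mu-1)\bigl(\rho(\|x\|)\,\mathrm{Id} + \rho'(\|x\|)\,\tfrac{x x^{T}}{\|x\|}\bigr)$ and observe that the operator norm of the bracketed term is bounded by a constant $C=C(R,\rho)$ independent of $\mu$; hence for $|\mu-1|<\epsilon:=1/(2C)$ the derivative $D_xF_\mu$ is invertible everywhere. Combined with $F_\mu$ being a proper local diffeomorphism that equals the identity off a compact set, it is a diffeomorphism of $\R^n$.

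The main obstacle — really the only nontrivial point — is ensuring $F_\mu$ stays a global diffeomorphism, i.e. controlling invertibility of $D_xF_\mu$ uniformly in $x$ while simultaneously keeping the support inside $U$ and the derivative at $p$ exactly $\mu\,\mathrm{Id}$; this is handled by choosing $\epsilon$ small in terms of the fixed data $(R,\rho)$ as above and by the compact support forcing injectivity (a $C^1$-small perturbation of the identity that is the identity outside a ball is a bijection of $\R^n$, as one sees via degree theory or a direct Banach fixed-point argument on $F_\mu^{-1}$). Finally I would transport everything back: define $f_\mu = \id$ on $M\setminus\psi^{-1}(\bar B_{2R/3})$ and $f_\mu = \psi^{-1}\circ F_\mu\circ\psi$ on $V$, check the two definitions agree on the overlap (where both are the identity), so $f_\mu$ is a well-defined $C^\infty$ diffeomorphism of $M$ with $f_\mu(p)=p$, $f_\mu|_{M\setminus U}=\id$, and $d_pf_\mu=\mu\,\mathrm{Id}$, and note that the continuity of $\mu\mapsto f_\mu$ in the $C^r$-topology follows from that of $\mu\mapsto F_\mu$ since $\psi$ is a fixed chart.
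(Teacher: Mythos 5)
Your construction is the same as the paper's: reduce to $\R^n$ via a chart and set $f_\mu(x)=x+(\mu-1)\rho(\|x\|)x$ for a radial bump function $\rho$, so that $f_\mu=\mu\,\mathrm{Id}$ near the origin and the identity outside a small ball. The proposal is correct, and in fact supplies the one detail the paper leaves as ``direct'' --- the uniform bound on the Jacobian guaranteeing $f_\mu$ is a global diffeomorphism for $|\mu-1|$ small.
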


\begin{proof}
Taking a local chart the problem is reduced to Euclidean $\R^n$. 
Then we will assume that $M=\R^n$, $p=0$ and $\clos(B_s(p))\subset U$ for some $s>0$.
Consider a $C^\infty$ function $\rho\colon\R\to[0,1]$ such that 
$\rho(x)=1$ if $x\leq s/2$ and $\rho(x)=0$ if $x\geq s$. 
Define $f_\mu\colon\R^n\to\R^n$ by 
\begin{equation}
\label{eqPerturb}
  f_\mu(x)
  =x+(\mu-1)\rho(\|x\|)x
\end{equation} 
where $\|\cdot\|$ denotes the Euclidean norm.
If $\|x\|<s/2$ then $f_\mu(x)=x+(\mu-1)x=\mu x$. 
Therefore, $d_pf=\mu I$. The rest of the details are direct from (\ref{eqPerturb}).
\end{proof}

A point $x\in M$ is \emph{Lyapunov stable} for $f\colon M\to M$ 
if for all $\epsilon>0$ there is $\delta>0$ such that 
if $\dist(y,x)<\delta$ then $\dist(f^n(x),f^n(y))<\epsilon$ for all $n\geq 0$. 
In \cite{Ka2}*{Theorem 1.6} it is shown that cw-expansive homeomorphisms admits no stable points. 
This is done for a Peano continuum, as is our compact connected manifold $M$. 
This result was previously proved by Lewowicz and Hiraide for expansive homeomorphisms 
on compact manifolds. 
This is a key point in the following proof.

\begin{prop}
\label{propEigen}
If $f$ is a $C^r$-robustly cw-expansive diffeomorphism on a compact manifold $M$ (arbitrary dimension)
and $f^l(p)=p$ then $d_pf^l$ has at least one eigenvalue of modulus greater than 1 and 
at least one eigenvalue of modulus less than 1.
\end{prop}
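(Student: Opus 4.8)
The plan is to argue by contradiction, showing that if $d_pf^l$ has all eigenvalues of modulus $\le 1$ (the case of all eigenvalues of modulus $\ge 1$ is symmetric, replacing $f$ by $f^{-1}$), then a small $C^r$-perturbation of $f$ produces a Lyapunov stable periodic point, contradicting the theorem of Kato quoted above (a cw-expansive homeomorphism of a Peano continuum has no stable points). So suppose, for contradiction, that all eigenvalues of $A:=d_pf^l$ have modulus $\le 1$.

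First I would normalize: replacing $f$ by $f^l$, we may assume $f(p)=p$ (note that a $C^r$-neighborhood of $f$ induces, via $g\mapsto g^l$, a $C^r$-neighborhood of $f^l$, and Lyapunov stability of $p$ for $g^l$ implies Lyapunov stability of $p$ for $g$ since the finitely many maps $g,\dots,g^{l-1}$ are equicontinuous near the fixed point of $g^l$). Next I would use Lemma \ref{lemPert}: pick a small neighborhood $U$ of $p$ on which $f$ is $C^r$-close to its linear part in suitable coordinates, and compose with the family $f_\mu$ to obtain $\tilde f_\mu = f\circ f_\mu$, which is $C^r$-close to $f$ for $\mu$ near $1$, fixes $p$, agrees with $f$ outside $U$, and satisfies $d_p\tilde f_\mu = A\cdot(\mu\,\mathrm{Id}) = \mu A$. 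Choosing $\mu=1-\epsilon$ slightly less than $1$, the matrix $\mu A$ has all eigenvalues of modulus strictly less than $1$, so $p$ is an attracting fixed point of $\tilde f_\mu$; in particular there is a neighborhood $V$ of $p$ with $\tilde f_\mu^n(y)\to p$ uniformly on $V$, hence $p$ is Lyapunov stable for $\tilde f_\mu$. Since $\tilde f_\mu$ lies in the $C^r$-neighborhood on which every diffeomorphism is cw-expansive, this contradicts \cite{Ka2}*{Theorem 1.6}.

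The main technical point — and the step I expect to require the most care — is making precise that $d_p\tilde f_\mu=\mu A$ forces genuine (local) asymptotic stability: when $A$ has eigenvalues exactly on the unit circle with nontrivial Jordan blocks, $\mu A$ for $\mu<1$ still contracts, but one must check the estimate survives the nonlinear remainder of $\tilde f_\mu$ near $p$. This is the standard fact that a fixed point whose derivative has spectral radius $<1$ is locally asymptotically stable (via an adapted norm making $\|\mu A\|<1$ and Taylor's theorem controlling the higher-order terms on a small ball), so there is no real obstacle, only bookkeeping. One should also note the hypothesis $r\ge 1$ is all that is used, and that the dimension of $M$ is arbitrary, as claimed. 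Finally, the symmetric case (all eigenvalues of modulus $\ge 1$) follows by applying the above to $f^{-1}$, whose periodic points and robust cw-expansivity are inherited from $f$, and then recalling $d_pf^{-l}=(d_pf^l)^{-1}$ so that "all eigenvalues of modulus $\ge 1$ for $f$" becomes "all eigenvalues of modulus $\le 1$ for $f^{-1}$."
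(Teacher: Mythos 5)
Your argument is, in substance, the paper's: compose with the local dilation $f_\mu$ of Lemma \ref{lemPert} to push all eigenvalues strictly inside the unit circle, obtain a Lyapunov stable periodic point for a nearby diffeomorphism, and contradict \cite{Ka2}*{Theorem 1.6}. The contraction issue you flag at the end (spectral radius $<1$ implies local asymptotic stability via an adapted norm) is standard and not a real obstacle.

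The one step that does not work as written is the normalization ``replace $f$ by $f^l$.'' The justification you give --- that a $C^r$-neighborhood of $f$ induces, via $g\mapsto g^l$, a $C^r$-neighborhood of $f^l$ --- is false: the image of a neighborhood under $g\mapsto g^l$ is in general a thin subset of any neighborhood of $f^l$, not a neighborhood. This matters because the robustness hypothesis only guarantees cw-expansivity for perturbations of $f$; after the replacement you perturb $f^l$ to $\tilde f_\mu=f^l\circ f_\mu$ and then invoke robust cw-expansivity for $\tilde f_\mu$, which is not licensed unless you exhibit $\tilde f_\mu$ as the $l$-th power of some $g$ that is itself $C^r$-close to $f$. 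The repair is exactly the construction you already have, applied to $f$ directly: choose the support $U$ of $f_\mu$ so small that $\clos(U)$ is disjoint from $f(p),\dots,f^{l-1}(p)$ (as the paper does), set $g=f\circ f_\mu$, and note that $g$ is $C^r$-close to $f$, that $p$ is $l$-periodic for $g$, and that $d_pg^l=\mu\, d_pf^l$ because $f_\mu$ is the identity near the other points of the orbit. With that modification your proof coincides with the one in the paper; the treatment of the symmetric case via $f^{-1}$ is fine.
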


\begin{proof}
Consider an open set $U\subset M$ containing the periodic point $p$ and such that $f^i(p)\notin \clos(U)$ for all $i=1,\dots,l-1$. 
Arguing by contradiction assume that 
the eigenvalues of $d_pf^l$ are smaller or equal than 1 in modulus (being the other case similar). 
Take from Lemma \ref{lemPert} a $C^r$-diffeomorphism $f_\mu$ of $M$ 
fixing $p$ and being the identity outside $U$. 
In particular, $f_\mu$ is the identity in a neighborhood of the points $f(p),\dots,f^{l-1}(p)$. 
Assume that $\mu\in (0,1)$ is close to 1. 
Define $g=f\circ f_\mu$. 
In this way $p$ is a periodic point of $g$ of period $l$, $g$ is $C^r$-close to $f$ and the 
eigenvalues of $d_pg^l$ are $\mu\lambda_1,\dots,\mu\lambda_j$ wich have modulus (strictly) smaller than 1
(being $\lambda_1,\dots,\lambda_j$ the eigenvalues of $d_pf^l$). 
Then $p$ is a hyperbolic sink for $g$, in particular it is Lyapunov stable. 
Since $f$ is $C^r$-robustly cw-expansive, 
we can assume that $g$ is cw-expansive, arriving to a contradiction with \cite{Ka2}*{Theorem 1.6}. 
\end{proof}

This proposition has the following direct consequence on two-dimensional manifolds:

\begin{cor}
  Every $C^r$-robustly cw-expansive diffeomorphism on a compact surface is a $C^r$-star diffeomorphism.
\end{cor}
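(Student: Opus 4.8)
The plan is to deduce the corollary directly from Proposition \ref{propEigen}. Recall that a $C^r$-star diffeomorphism is one for which there is a $C^r$-neighborhood all of whose members have only hyperbolic periodic orbits. So let $f$ be $C^r$-robustly cw-expansive on a compact surface $S$, and let $\mathbb{U}$ be a $C^r$-neighborhood of $f$ consisting of cw-expansive diffeomorphisms. I would first observe that cw-expansivity is inherited under taking further small perturbations only in the sense that each $g\in\mathbb{U}$ is itself $C^r$-robustly cw-expansive once we shrink to a smaller neighborhood $\mathbb{U}_g\subset\mathbb{U}$; this is the trivial fact that an interior point of a set has a neighborhood inside the set. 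Hence Proposition \ref{propEigen} applies to every $g\in\mathbb{U}$.

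Next I would take any $g\in\mathbb{U}$ and any periodic point $p$ of $g$, say $g^l(p)=p$. Since $\dim S=2$, the linear map $d_pg^l$ has exactly two eigenvalues (counted with multiplicity), and they are either both real or a complex-conjugate pair. Proposition \ref{propEigen}, applied to $g$ (which is $C^r$-robustly cw-expansive), tells us that $d_pg^l$ has at least one eigenvalue of modulus strictly greater than $1$ and at least one of modulus strictly less than $1$. With only two eigenvalues available, this forces both eigenvalues to be off the unit circle: one inside, one outside. In particular the complex-conjugate case is excluded (a conjugate pair has equal moduli), so the two eigenvalues are real with $|\lambda_1|<1<|\lambda_2|$, and $d_pg^l$ is hyperbolic. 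Therefore the periodic orbit of $p$ is a hyperbolic set for $g$.

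Since $g\in\mathbb{U}$ and $p$ were arbitrary, every periodic orbit of every diffeomorphism in the $C^r$-neighborhood $\mathbb{U}$ of $f$ is hyperbolic, which is exactly the definition of $f$ being a $C^r$-star diffeomorphism. There is no real obstacle here: the entire content is carried by Proposition \ref{propEigen}, and the only additional ingredient is the dimension count, which collapses the "at least one of each" conclusion into genuine hyperbolicity of the $2\times 2$ matrix $d_pg^l$. The one point worth stating carefully is why the neighborhood witnessing robust cw-expansivity for $f$ also witnesses it (after shrinking) for each of its members, but this is immediate from the definition of robustness as membership in the interior of the set of cw-expansive diffeomorphisms.

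\begin{proof}
Let $\mathbb{U}$ be a $C^r$-neighborhood of $f$ consisting of cw-expansive diffeomorphisms, which exists by hypothesis. Fix $g\in\mathbb{U}$. Since $\mathbb{U}$ is open, $g$ lies in the interior of the set of cw-expansive diffeomorphisms, i.e. $g$ is itself $C^r$-robustly cw-expansive. Let $p$ be a periodic point of $g$, with $g^l(p)=p$. By Proposition \ref{propEigen}, $d_pg^l$ has at least one eigenvalue of modulus greater than $1$ and at least one eigenvalue of modulus less than $1$. Since $\dim S=2$, the map $d_pg^l$ has exactly two eigenvalues, so they must be $\lambda_1,\lambda_2$ with $|\lambda_1|<1<|\lambda_2|$; in particular $d_pg^l$ has no eigenvalue on the unit circle, so the periodic orbit of $p$ is hyperbolic. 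As $g\in\mathbb{U}$ and $p$ were arbitrary, every periodic orbit of every diffeomorphism in $\mathbb{U}$ is a hyperbolic set, so $f$ is a $C^r$-star diffeomorphism.
\end{proof}
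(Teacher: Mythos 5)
Your proposal is correct and matches the paper's intent: the corollary is stated there as a ``direct consequence'' of Proposition \ref{propEigen}, and the only content is exactly what you supply, namely that each $g$ in the witnessing neighborhood is itself $C^r$-robustly cw-expansive and that in dimension two the ``one eigenvalue of modulus $>1$ and one of modulus $<1$'' conclusion forces $d_pg^l$ to be hyperbolic. Nothing is missing.
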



To our best knowledge it is not known whether for $r\geq 2$ every $C^r$-star diffeomorphism is Axiom A, 
even for $M$ a compact surface. 
The next result is another partial result in the direction of proving the converse of 
Corollary \ref{corQrA}.

\begin{thm}
\label{thmPertCr}
  Let $f\colon S\to S$ be a $C^r$-diffeomorphism Axiom A without cycles. 
  If $f$ is $C^r$-robustly cw-expansive then $f$ is $Q^r$-Anosov.
\end{thm}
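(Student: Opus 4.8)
The plan is to prove the contrapositive: assuming $f$ is Axiom A without cycles but \emph{not} $Q^r$-Anosov, I will produce an arbitrarily small $C^r$-perturbation $g$ of $f$ that is not cw-expansive. Since $f$ is Axiom A without cycles and not $Q^r$-Anosov, there is a point $x\in S$ with an $r$-tangency, i.e.\ a point where a stable manifold $W^s(y)$ and an unstable manifold $W^u(z)$ (for $y,z\in\Omega(f)$, with $x\in W^s_f(y)\cap W^u_f(z)$) meet with contact of order at least $r$. As in the proof of Theorem~\ref{thmQrAbierto}, since such tangencies are carried along orbits and $\Omega(f)$ is uniformly locally maximal, I may assume $x$ lies in the compact set $K=S\setminus U_1$ disjoint from $\Omega(f)$; in particular $x$ is a wandering point, so its forward and backward orbit escape any fixed neighborhood, and I can choose a small ball $B$ around $x$ with $f^n(B)\cap B=\emptyset$ for all $n\neq 0$.

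The heart of the argument is a \emph{local} perturbation supported in $B$ that creates a nontrivial connected set whose whole orbit stays small. Near $x$ choose $C^r$ coordinates in which $W^u_f(z)$ locally coincides with the curve $\gamma^u=\{(t,0)\}$ and $W^s_f(y)$ with the graph of a $C^r$ function $h$ having $h(0)=h'(0)=\dots=h^{(r-1)}(0)=0$ (this is exactly the $r$-tangency; the $r$-th derivative may be nonzero, but it is the $(r-1)$-jets that agree — here Lemma~\ref{lemRolle} is the relevant bookkeeping device, since an $r$-tangency forces enough coincident roots of the difference of the graphs). The idea is to push $W^u_f(z)$ by a perturbation of the form $f_\mu$ or, more precisely, a $C^r$-small bump diffeomorphism $\phi$ supported in $B$ that bends $\gamma^u$ so that, after applying $\phi$, it \emph{coincides} with the stable graph $h$ along a whole subarc through $x$, or at least crosses it in a full interval. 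Because the contact order is $r$ and we are allowed $C^r$-small (not $C^{r+1}$-small) perturbations, the gap between $\gamma^u$ and $h$ on a $\delta$-ball is of size $O(\delta^r)$ while a $C^r$-bump of size $\epsilon$ supported there can move points by $O(\epsilon\delta^r)$ in the appropriate direction; hence for $\delta$ small and $\epsilon$ small one genuinely can absorb the gap and force an arc $A\subset W^u_g(z)\cap W^s_g(y)$ for the perturbed map $g=\phi\circ f$ (note $z,y$ remain in $\Omega(g)$ and their local invariant manifolds vary $C^r$-continuously by Theorem~\ref{teoSMT} and persistence of Axiom A without cycles).

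Such an arc $A$ lying simultaneously in a local stable and a local unstable manifold of $g$ has the property that $\diam(g^n(A))\to 0$ as $n\to+\infty$ (it is in a stable manifold) and as $n\to-\infty$ (it is in an unstable manifold), while for the finitely many intermediate $n$ its diameter is controlled by shrinking $\delta$. Consequently $\diam(g^n(A))<\delta_0$ for all $n\in\Z$ for any prescribed $\delta_0>0$, and $A$ is a nontrivial connected set, so $g$ is not cw-expansive. Since $g$ can be taken arbitrarily $C^r$-close to $f$, this contradicts $C^r$-robust cw-expansivity, and the contrapositive is proved.

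The main obstacle I anticipate is the perturbation step: making rigorous that a $C^r$-small, locally supported diffeomorphism can force an entire \emph{arc} (not just higher-order contact at one point) to lie in the intersection of the perturbed stable and unstable manifolds, while simultaneously not destroying the Axiom A / no-cycles structure and keeping the new invariant manifolds of $y$ and $z$ $C^r$-close to the old ones. One must check that the local stable manifold of $y$ through $x$ is genuinely left (almost) unchanged by a perturbation supported away from $\Omega(f)$ and only mildly bends the unstable manifold of $z$, and that the algebra of the contact order — quantified via Lemma~\ref{lemRolle} — really does match the regularity budget $C^r$ rather than $C^{r+1}$. Handling the case distinction (which of $W^s$, $W^u$ one chooses to bend, and the orientation of the bump) and the uniform estimates on $\diam(g^n(A))$ over the bounded range of intermediate iterates are the remaining routine but delicate points.
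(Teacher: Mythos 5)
Your overall strategy is the same as the paper's: localize at a wandering tangency point, and compose $f$ with a bump-supported diffeomorphism that translates the unstable graph onto the stable graph, producing an arc whose whole orbit has small diameter. However, there is a genuine error at the crux of the argument, namely your normalization of the tangency. You take the difference $R=g_s-g_u$ of the two graphs to satisfy $R(0)=R'(0)=\dots=R^{(r-1)}(0)=0$ with ``the $r$-th derivative may be nonzero.'' That is not what an $r$-tangency is in this paper: by definition the degree-$r$ Taylor polynomials of $g_s$ and $g_u$ coincide, so $R^{(t)}(0)=0$ for all $t\le r$ and hence $R^{(t)}(x)=o(x^{r-t})$. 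This extra order is exactly what makes the perturbation $C^r$-small. With your weaker hypothesis, $R(x)\sim c x^r$ with $c\neq 0$, and the cut-off perturbation $\rho(x/\mu,y/\mu)\,R(x)$ has $C^r$-norm bounded away from $0$ as $\mu\to 0$: a top-order derivative produces terms of size $\mu^{-j}R^{(i)}(x)$ with $i+j=r$ and $|x|\le\mu$, which are of order $|R^{(i)}(x)/x^{j}|\to$ a nonzero constant. Your own heuristic already betrays this: to absorb a gap of size exactly $c\delta^r$ by a displacement of size $O(\epsilon\delta^r)$ you need $\epsilon\gtrsim c$, which is not arbitrarily small. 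So under your normalization the construction yields only a $C^{r-1}$-small perturbation, which proves a weaker statement than Theorem \ref{thmPertCr}.

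The error is not cosmetic, because your version of the claim is false: a quadratic tangency ($r=2$, $R(x)=cx^2$, $c\neq 0$) satisfies your jet condition, yet the genus-two examples of Theorem \ref{TeoB} have exactly such tangencies and are $C^2$-robustly $2$-expansive (hence $C^2$-robustly cw-expansive) by Corollary \ref{corQrA}. If your argument worked as stated, it would destroy cw-expansivity of those examples by a $C^2$-small perturbation, a contradiction. The repair is simply to use the paper's definition of $r$-tangency, after which the estimate
\[
\left|\frac{\partial^i}{\partial x^i}\frac{\partial^j}{\partial y^j}\bigl[\rho(x/\mu,y/\mu)R(x)\bigr]\right|
\le C\sum_{l=0}^i\left|\frac{R^{(i-l)}(x)}{x^{r-i+l}}\right|\to 0
\]
goes through and the rest of your outline (wandering support, the arc lying in both invariant sets, diameters shrinking in both time directions) is correct and matches the paper. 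A minor further remark: you do not need to worry about preserving Axiom A or the no-cycle condition for the perturbed map; since the support of the perturbation is a wandering neighborhood, the arc is contained in the stable and unstable sets of the perturbed map by direct inspection, and non-cw-expansivity follows without invoking any structure theory for $g$.
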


\begin{proof}
We will argue by contradiction assuming that $f$ is not $Q^r$-Anosov. 
This implies that there is a wandering point $p\in S$ with an $r$-tangency. 
Take $C^r$ local coordinates $\phi\colon I\times J\subset\R^2\to S$ around $p$, 
where $I,J\subset \R$ are open intervals.
Since $p$ is a wandering point we can suppose that $f^n(\phi(I\times J))\cap \phi(I\times J)=\emptyset$ for all 
$n\in\Z$, $n\neq 0$.
Let $g_s,g_u\colon I\to J$ be $C^r$ functions such that 
their graphs describe the local stable and local unstable manifold of $p$ in coordinates. 
Since there is an $r$-tangency at $p$ we can suppose that 
the Taylor polynomials of order $r$ of $g_s$ and $g_u$ vanishes at $0$. 

Define the $C^r$ diffeomorphism $h\colon I\times \R\to I\times \R$ as 
$$h(x,y)=(x,g_s(x)-g_u(x)+y).$$ 
Let $\sigma\colon \R\to [0,1]$ be a $C^\infty$ function such that 
$\sigma(a)=1$ for $a\leq 1/2$ and $\sigma(a)=0$ for $a\geq 1$. 
For $\mu>0$ define $j_\mu\colon I\times J\to I\times\R$ as
$$j_\mu(x,y)=\sigma(\sqrt{x^2+y^2}/\mu)h(x,y).$$
Define $f_\mu\colon S\to S$ by
\[
f_\mu(q)=\left\{
\begin{array}{ll}
f\circ \phi\circ j_\mu\circ\phi^{-1}(q) & \hbox{ if } q\in\phi(B_\mu(0,0))\\
f(q) & \hbox{ in other case.}
\end{array}
\right.
\]

Let us show that $f_\mu$ is not cw-expansive for $\mu>0$ small. 
If $\sqrt{x^2+y^2}<\mu/2$ then $j_\mu(x,y)=h(x,y)$. 
Then, we must note that $h(x,g_u(x))=(x,g_s(x))$ 
and this means that $h$ maps the graph of $g_u$ into the graph of $g_s$. 
For $f_\mu$ this implies that there is an arc 
$\gamma=\{\phi(x,g_s(x)): |x|<\mu/\sqrt8\}\subset S$ 
such that $\diam(f^n(\gamma))\to 0$ as $n\to \pm\infty$. 
Then, arbitrarily small subarcs of $\gamma$ contradict the cw-expansivity of each $f_\mu$ for arbitrarily small 
cw-expansive constants. 

Now we will show that $f_\mu$ is a $C^r$ small perturbation $f$ if $\mu$ is close to 0. 
By definition, they coincide for $q\notin\phi(B_\mu(0,0))$. 
Therefore, the problem is reduced to show that $j_\mu$ is a $C^r$ small perturbation of the identity in 
$I\times J$.
Notice that 
$j_\mu(x,y)-Id(x,y)=\sigma(\sqrt{x^2+y^2}/\mu)h(x,y)=(0,g_s(x)-g_u(x))$. 
In order to conclude we will show that the map 
$$l(x,y)=\sigma(\sqrt{x^2+y^2}/\mu)(0,g_s(x)-g_u(x))$$ is $C^r$-close to $(x,y)\mapsto (0,0)$. 
Because of the $r$-tangency at $p$ we know that $R(x)=g_s(x)-g_u(x)$ satisfies 
$R(x)/x^r\to 0$ as $x\to 0$.
This and L'Hospital's rule implies that 
$R^{(t)}(x)/x^{r-t}\to 0$ as $x\to 0$ for all $t=0,1,\dots,r$. 
As before, $R^{(t)}(x)$ denotes the $t^{th}$ derivative of $R$ at $x$.
Define $\rho(x,y)=\sigma(\sqrt{x^2+y^2})$ and let $K=\|\rho\|_{C^r}$. 
Given $\epsilon>0$ consider $\mu>0$ such that 
$$\left|\frac{R^{(t)}(x)}{x^{r-t}}\right|<\frac\epsilon{r!rK}$$ for all $x\in (-\mu,\mu)$.
For $\|(x,y)\|\geq\mu$ we have that $\rho(x/\mu,y/\mu)=0$, so there is nothing to estimate. 
We will assume that $\|(x,y)\|\leq\mu$. 
Given two non-negative integers $i,j$ such that $i+j\leq r$ we have that 
\[
  \begin{array}{ll}
    \displaystyle\left|\frac{\partial^i}{\partial x^i}\frac{\partial^j}{\partial y^j}[\rho(x/\mu,y/\mu)R(x)]\right|
    &=\displaystyle
      \left|
	\frac{\partial^i}{\partial x^i} \left[\frac{\partial^j\rho}{\partial y^j}(x/\mu,y/\mu)\frac1{\mu^j}R(x)\right]
      \right|\\
    &\leq\displaystyle
      \frac1{\mu^j}
	\left|
	  \sum_{l=0}^i i!\frac{\partial^l}{\partial x^l}\frac{\partial^j\rho}{\partial y^j}(x/\mu,y/\mu)
	  \frac1{\mu^l}
	  R^{(i-l)}(x)
	\right|\\
    &\leq\displaystyle 
      \frac{r!}{\mu^{j+l}}\|\rho\|_{C^r}\left|\sum_{l=0}^i R^{(i-l)}(x) \right|.\\
    &\leq\displaystyle 
      r!K\left|\sum_{l=0}^i \frac{R^{(i-l)}(x)}{x^{j+l}} \right|
      \leq r!K\sum_{l=0}^i \left|\frac{R^{(i-l)}(x)}{x^{r-i+l}} \right|\leq \epsilon.\\
      
  \end{array}
\]
This proves that $f_\mu$ is a $C^r$-approximation of $f$ that is not cw-expansive. 
This contradiction proves the theorem.
\end{proof}

\section{Examples of $N$-expansive diffeomorphisms}
\label{secEx}
In this section we present examples of $C^r$-robustly $N$-expansive surface diffeomorphisms 
that are not Anosov. They are variations of the 2-expansive homeomorphism presented in \cite{APV}, 
that in turn, is based on the three-dimensional quasi-Anosov diffeomorphism given in \cite{FR}.

\begin{thm}
 \label{TeoB}
For each $r\geq 2$ there is a $C^r$-robustly $r$-expansive 
surface diffeomorphism that is not $(r-1)$-expansive.
\end{thm}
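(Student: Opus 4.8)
The plan is to construct, for each $r\geq 2$, an Axiom A diffeomorphism of the genus two surface $S$ having exactly one $r$-tangency between a stable and an unstable manifold at a single wandering orbit, arranged so that this tangency creates exactly $r$ intersection points between a stable and an unstable arc (robustly, for nearby diffeomorphisms) while never allowing $r+1$ such points. The starting point is the model from \cite{APV}, itself coming from the Franks--Robinson three-dimensional quasi-Anosov example \cite{FR}: one takes a linear Anosov diffeomorphism on $\mathbb T^2$, performs a DA-type (derived from Anosov) surgery or a ``blow-up-and-glue'' construction producing a surface of higher genus whose non-wandering set is a finite union of hyperbolic basic sets (a source, a sink, and saddle-type pieces), with the wandering dynamics between them exhibiting a prescribed tangency of the invariant foliations. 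The key design choice is to replace the quadratic tangency of \cite{APV} by a tangency of contact order exactly $r$; concretely, in local coordinates around the wandering tangency point $p$ one wants the local stable manifold to be the graph of $y=0$ and the local unstable manifold to be the graph of $y=x^{r+1}$ (or any $C^r$ function whose $r$-jet vanishes but which is not flat to infinite order), so that $g_s-g_u$ has a zero of order exactly $r+1$ at $0$.

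The steps, in order, would be as follows. First I would fix a linear Anosov map $A$ on $\mathbb T^2$ and recall the genus-two construction of \cite{APV}: cut along a suitable closed curve or a pair of periodic orbits, insert a ``plug'' containing a hyperbolic source and sink, and reglue so that the resulting $f_0\colon S\to S$ is Axiom A without cycles, with $\Omega(f_0)$ consisting of the surviving hyperbolic set(s) plus the new source and sink. Second, I would localize near the unique heteroclinic tangency and modify the gluing so that the invariant manifolds meet with contact order exactly $r$ at a wandering point $p$, using an interpolation by a bump function (as in the proof of Lemma \ref{lemPert} and Theorem \ref{thmPertCr}) to ensure the modification is supported in a small neighborhood and does not disturb Axiom A or the no-cycle condition, which are $C^1$-open. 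Third, I would verify that $f$ is $Q^r$-Anosov: it is Axiom A without cycles by construction, and the only tangency present has order exactly $r$, hence is not an $r$-tangency in the sense of Section \ref{secQrA} (the $r$-jets of $g_s$ and $g_u$ do not coincide). By Corollary \ref{corQrA}, $f$ is then $C^r$-robustly $r$-expansive with a uniform constant on a $C^r$-neighborhood $\mathbb U$. Fourth, I would show $f$ is not $(r-1)$-expansive by exhibiting, for every $\delta>0$, a set $A$ with $|A|=r>r-1$ and $\diam(f^n(A))<\delta$ for all $n\in\mathbb Z$: taking $A$ to be the $r$ intersection points of a short stable arc through $p$ with a short unstable arc through $p$ near the tangency (a generic contact-order-$r$ crossing produces $r$ transverse-or-tangential intersections in a small box), these points lie on a common stable manifold and a common unstable manifold, so their forward and backward iterates all contract, keeping the diameter arbitrarily small. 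Shrinking the arcs makes $\diam(f^n(A))<\delta$ for every prescribed $\delta$, defeating every candidate $(r-1)$-expansivity constant.

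The main obstacle I expect is the fourth step done \emph{robustly in the right direction}, i.e., making sure the tangency is genuinely of order $r$ and not accidentally higher, and that it truly forces $r$ intersection points rather than fewer — one must choose the unstable branch so that $g_s-g_u$ changes sign appropriately (e.g.\ behaves like $x^{r+1}$ up to higher order, or like $x^r$ times a bump, depending on parity) so that, after a small transverse translation within the family, one genuinely sees $r$ roots by Lemma \ref{lemRolle} applied in reverse; simultaneously one needs the \emph{opposite} bound, that no nearby diffeomorphism in $\mathbb U$ ever produces $r+1$ intersections of a stable and unstable arc of small diameter, which is exactly what Theorem \ref{teoRobNexp} gives once $Q^r$-Anosov is established. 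A secondary difficulty is checking that the surgery genuinely lives on a compact surface with the claimed non-wandering structure and that inserting the higher-order tangency does not create cycles; this is handled by keeping all modifications in small wandering neighborhoods and invoking the $C^1$-openness of ``Axiom A without cycles'' together with the persistence of the basic-set structure under $C^1$-small perturbations. Everything else — the continuity of the family $f_\mu$ in the $C^r$-topology, the estimates on the bump-function interpolation — is routine and parallels computations already carried out in the proof of Theorem \ref{thmPertCr}.
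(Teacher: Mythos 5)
Your overall strategy matches the paper's: start from the genus-two Axiom A example of \cite{APV}, control the order of contact of the stable and unstable foliations in the wandering gluing region, invoke Corollary \ref{corQrA} for robust $r$-expansivity, and exhibit $r$ nearby points lying on a common stable and a common unstable leaf to defeat $(r-1)$-expansivity. But there is a genuine gap in the step where you produce those $r$ points, and it is not a technicality: it is the heart of the construction. A \emph{single} tangency of contact order exactly $r$ between one stable leaf and one unstable leaf (say $g_s\equiv 0$, $g_u(x)=x^r$) does not yield $r$ intersection points, neither at $p$ itself (where the two leaves meet only at $x=0$) nor after sliding to nearby leaves: translating the stable leaf to $y=c$ gives at most $2$ intersections when $r$ is even and exactly $1$ when $r$ is odd. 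So your construction, as described, is $2$-expansive for even $r$ and in fact expansive for odd $r$; it proves the theorem only for $r=2$. To obtain $r$ roots of $g_s-g_u$ clustered near the origin one must unfold the degenerate root into $r$ simple ones, which requires perturbing the intermediate Taylor coefficients; in other words the unstable \emph{foliation} must contain leaves that oscillate across a single stable leaf $r$ times. This is exactly what the paper engineers: the unstable leaves are modeled by explicit one-parameter families such as $p_a(x)=x^3+(a^2-1)x+9a$ for $r=3$ and $p_a(x)=x^4+(a^2-1)x^2+16a$ for $r=4$, chosen so that for $|a|$ slightly less than $1$ some leaf meets a horizontal stable leaf in $r$ points whose mutual distances tend to $0$ as $a\to 1$, while $p_a^{(r)}\equiv r!\neq 0$ guarantees that no $r$-tangency is ever created, so the map remains $Q^r$-Anosov and Theorem \ref{teoRobNexp} applies.

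A second, smaller problem is an internal inconsistency in your local model. You propose $g_u(x)=x^{r+1}$ ``so that $g_s-g_u$ has a zero of order exactly $r+1$''; but then the degree-$r$ Taylor polynomials of $g_s$ and $g_u$ at $0$ coincide, which is precisely an $r$-tangency in the sense of Section \ref{secQrA}, so the map would \emph{not} be $Q^r$-Anosov and Corollary \ref{corQrA} could not be invoked. For the robust $r$-expansivity half you need $g_s-g_u$ to vanish to order at most $r$ at every tangency; your later phrase ``order exactly $r$'' is the right condition, but it contradicts the $x^{r+1}$ model and, as explained above, is still insufficient by itself to produce the $r$ intersection points needed to kill $(r-1)$-expansivity.
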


\begin{proof}
We start with the case of $r=2$. 
It is essentially the example given in \cite{APV}, we recall some details from this paper.
Consider $S_1$ and $S_2$ two copies of the torus $\R^2/\Z^2$
and the $C^\infty$-diffeomorphisms $f_i\colon S_i\to S_i$, $i=1,2$, such that:
1) $f_1$ is a derived-from-Anosov as detailed in \cite{Robinson},
2) $f_2$ is conjugate to $f_1^{-1}$ and
3) $f_i$ has a fixed point $p_i$, where $p_1$ is a source and $p_2$ is a sink.
Also assume that there are local charts $\varphi_i\colon D\to S_i$, $D=\{x\in\R^2:\|x\|\leq 2\}$,
such that
\begin{enumerate}
 \item $\varphi_i(0)=p_i$,
\item the pull-back of the stable (unstable) foliation by $\varphi_1$ ($\varphi_2$)
is the vertical (horizontal) foliation on $D$ and
\item $\varphi_1^{-1}\circ f^{-1}_1\circ \varphi_1(x)=\varphi_2^{-1}\circ f_2\circ \varphi_2(x)=x/4$
for all $x\in D$.
\end{enumerate}

Let $A$ be the annulus $\{x\in\R^2:1/2\leq \|x\|\leq 2\}$.
Consider the diffeomorphism $\psi\colon A\to A$ given by $\psi(x)=x/\|x\|^2$.
Denote by $\hat D$ the open disk $\{x\in\R^2:\|x\|<1/2\}$.
On $[S_1 \setminus \varphi_1(\hat D)]\cup [S_1\setminus \varphi_2(\hat D)]$
consider the equivalence relation generated by
\[
 \varphi_1(x)\sim \varphi_2\circ\psi (x)
\]
for all $x\in A$. 
Denote by $\overline x$ the equivalence class of $x$.
The surface
\[
S= \frac{[S_1 \setminus \varphi_1(\hat D)]\cup [S_1\setminus \varphi_2(\hat D)]}\sim
\]
has genus two, we are considering the quotient topology on $S$.
Define the $C^\infty$-diffeomorphism $f\colon S\to S$ by
\[
 f(\overline x)=\left\{
\begin{array}{ll}
\overline{f_1(x)} &\hbox{ if } x\in S_1 \setminus \varphi_1(\hat D)\\
\overline{f_2(x)} &\hbox{ if } x\in S_2 \setminus \varphi_2(D)\\
\end{array}
\right.
\]

We know that $f$ is Axiom A because
the non-wandering 
set consists of a hyperbolic repeller and a hyperbolic attractor. Also $f$ has no cycles.
The stable and unstable foliations in the annulus $\overline A=\varphi_1(A)$ looks like Figure \ref{fig}.
\begin{figure}[h]
\begin{center}
\includegraphics[scale=.8]{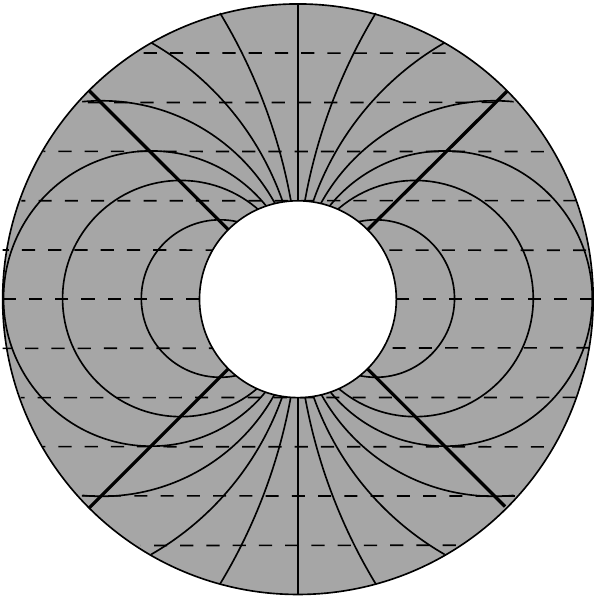}
\caption{Foliations in the annulus $\overline A$.
The circle arcs represent the unstable foliation after the inversion and the horizontal 
dot-lines are the stable foliation. 
The diagonal bold lines are the tangencies between stable and unstable manifolds.}
\label{fig}
\end{center}
\end{figure}
The tangencies are quadratic because in local charts stable manifolds are straight 
lines and unstable manifolds are circle arcs.
Then, applying Theorem \ref{teoRobNexp} we have that $f$ is $C^2$-robustly 2-expansive.
It is not 1-expansive (i.e. expansive) because near the line of tangencies we find pairs of points 
contradicting expansivity (for every expansive constant).

For the case $r=3$ we will change $\psi$ in an open set $U$ contained in $A$. 
Consider $\psi$ such that the stable and the unstable foliations looks like in 
Figure \ref{fig3exp}. 
\begin{figure}[h]
\begin{center}
\includegraphics[scale=.5]{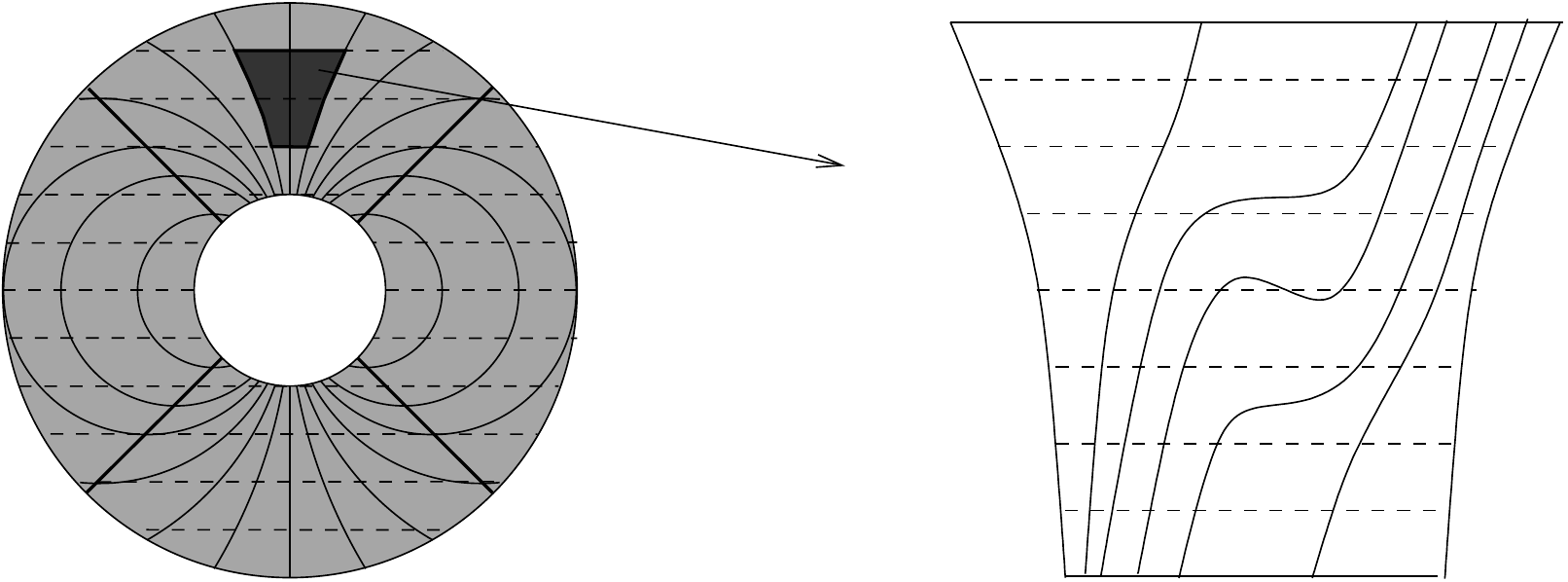}
\caption{Dot-lines represents the stable foliation and curved lines are the unstable foliation. 
In this way $f$ is not 2-expansive but it is 
$C^3$-robustly 3-expansive.}
\label{fig3exp}
\end{center}
\end{figure}
There are two curves that are topologically transverse but there is a tangency of order 2. 
Between these two curves there are points of non topologically transversality.
The unstable arcs are modeled by the one parameter family 
$$p_a(x)=x^3+(a^2-1)x+9a$$ for $x,a\in[-2,2]$. 
The presence of the term $9a$ implies that $\partial p_a/\partial a>0$ for all $x,a\in[-2,2]$.
If $|a|>1$ then $p'_a(x)>0$ for all $x$, so we have transversality. 
If $|a|=1$ then $p_a(x)$ is a translation of $x^3$, so there is a tangency of order 2 at $x=0$. 
If $|a|<1$ then $p_a(x)$ has a local maximum and local minimum that are close if $|a|$ is close to 1. 
Therefore, we see that $f$ is not 2-expansive. 
It is $C^3$-robustly 3-expansive because $p'''_a(x)=6\neq 0$ for all $x,a\in[-2,2]$.

For the case $r=4$ we consider an open set $U\subset A$ containing a cuadratic tangency as in Figure \ref{fig4exp}. 
The map $\psi$ is changed in $U$ in such a way that the unstable arcs corresponds to the curves on the right hand 
of the figure. They can be modeled with the polynomials $$p_a(x)=x^4+(a^2-1)x^2+16a.$$

\begin{figure}[h]
\begin{center}
\includegraphics[scale=.5]{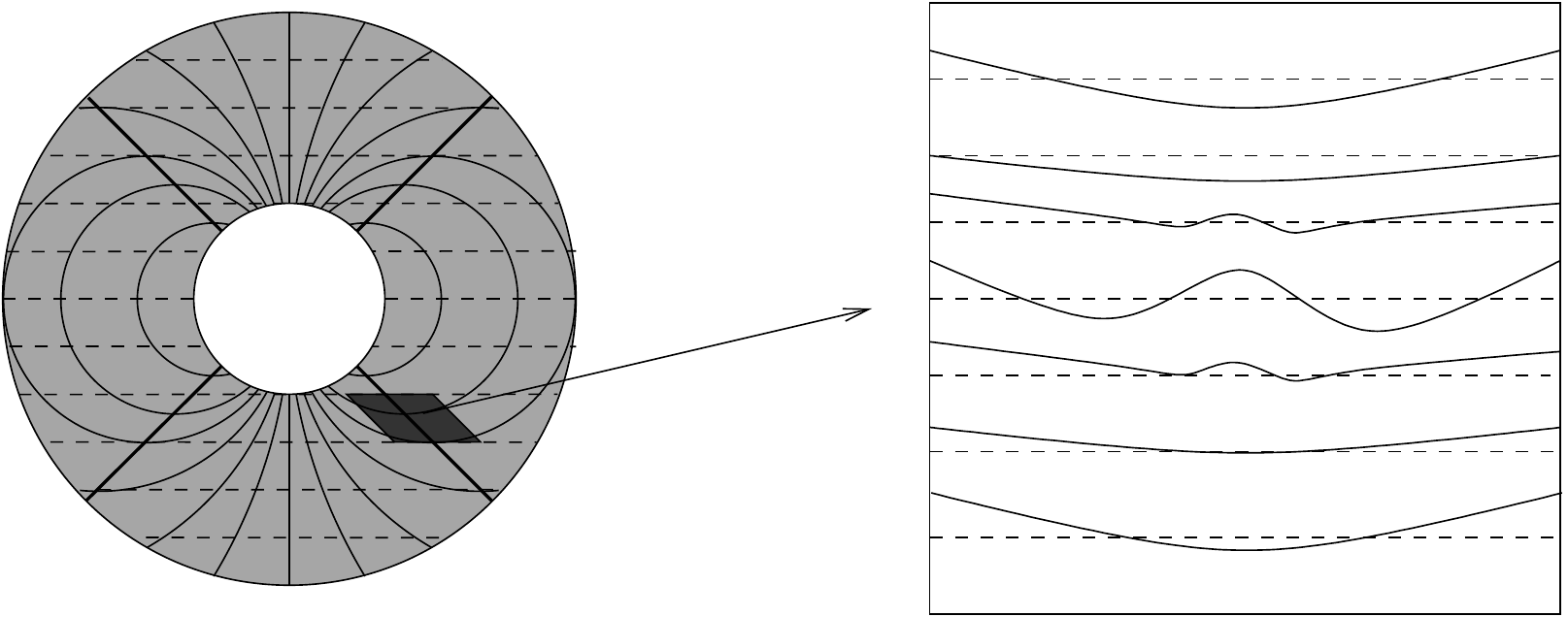}
\caption{Stable and unstable foliations for a 
$C^4$-robustly 4-expansive diffeomorphism that is not 3-expansive.}
\label{fig4exp}
\end{center}
\end{figure}

The general case $r\geq 5$ follows the same ideas.
For $r$ odd $\psi$ is changed near a box of local product structure. 
For $r$ even $\psi$ is changed near a quadratic tangency. 
\end{proof}

In particular we have:

\begin{cor}
  There are $C^r$-robustly cw-expansive surface diffeomorphisms that are not Anosov.
\end{cor}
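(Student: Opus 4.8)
The plan is to deduce this directly from Theorem \ref{TeoB} together with the hierarchy of expansivity notions recorded in Table \ref{tableExp}. Fix any $r\geq 2$ and let $f$ be the $C^r$-robustly $r$-expansive surface diffeomorphism that is not $(r-1)$-expansive provided by Theorem \ref{TeoB}; I claim this same $f$ witnesses the corollary.

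First I would observe that $r$-expansivity implies cw-expansivity, which is one of the elementary implications displayed at the bottom of Table \ref{tableExp}. Since $f$ is $C^r$-robustly $r$-expansive, there is a $C^r$-neighborhood of $f$ in which every diffeomorphism is $r$-expansive, hence cw-expansive; therefore $f$ is $C^r$-robustly cw-expansive.

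Next I would check that $f$ is not Anosov. Because $(r-1)\geq 1$, expansivity (that is, $1$-expansivity) implies $(r-1)$-expansivity by Table \ref{tableExp}; taking the contrapositive, the fact that $f$ is not $(r-1)$-expansive forces $f$ to be non-expansive. Since every Anosov diffeomorphism is expansive, $f$ is not Anosov. Equivalently, one may recall that the examples built in the proof of Theorem \ref{TeoB} carry tangencies between stable and unstable manifolds, so they fail the quasi-transversality condition and hence are not quasi-Anosov; as on a compact surface quasi-Anosov coincides with Anosov, this again gives that $f$ is not Anosov.

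I do not expect any genuine obstacle in this step: all the substantive work is already contained in the construction of Theorem \ref{TeoB}, and the present statement merely repackages that construction by invoking the trivial implications among the expansivity notions and the standard fact that Anosov diffeomorphisms are expansive.
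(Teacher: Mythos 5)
Your proof is correct and follows exactly the route the paper intends: the corollary is stated as an immediate consequence ("In particular") of Theorem \ref{TeoB} via the implications of Table \ref{tableExp} and the fact that Anosov diffeomorphisms are expansive. Nothing further is needed.
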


\begin{bibdiv}
\begin{biblist}

\bib{Ao92}{article}{
title={The set of axiom A diffeomorphisms with no cycles},
author={N. Aoki},
journal={Bulletin Brazilian Mathematical Society},
year={1992},
volume={23},
pages={21--65}} 

\bib{ArK}{article}{
author = {A. Artigue},
title = {Kinematic expansive flows},
journal = {Ergodic Theory and Dynamical Systems},
year={to appear}}

\bib{ArL}{article}{
author={A. Artigue},
title={Lipschitz perturbations of expansive systems},
journal={Disc. Cont. Dyn. Syst.},
volume={35}, 
year={2015},
pages={1829–1841}}

\bib{AD}{article}{
title={A Note on Measure-Expansive Diffeomorphisms},
author={A. Artigue},
author={D. Carrasco-Olivera},
journal={J. Math. Anal. Appl.},
year={to appear}}

\bib{APV}{article}{
author={A. Artigue},
author={M. J. Pacífico},
author={J. L. Vieitez},
title={N-expansive homeomorphisms on surfaces}, 
journal={Communications in Contemporary Mathematics}, 
year={to appear}}

\bib{BDV}{book}{
author={C. Bonatti}, 
author={L. J. Díaz},
author={M. Viana},
title={Dynamics beyond uniform hyperbolicity: A global geometric and probabilistic perspective},
year={2005},
publisher={Springer}}

\bib{BW}{article}{
author={R. Bowen and P. Walters}, 
title={Expansive one-parameter flows}, 
journal={J. Diff. Eq.}, year={1972}, pages={180--193},
volume={12}}

\bib{Fr71}{article}{
author={J. Franks},
title={Necessary conditions for stability of diffeomorphisms},
journal={Trans. Amer. Math. Soc.},
volume={158},
year={1971},
pages={301--308}}

\bib{FR}{article}{
author={J. Franks},
author={C. Robinson},
title={A quasi-Anosov diffeomorphism that is not Anosov},
journal={Trans. of the AMS},
volume={223},
year={1976},
pages={267--278}}

\bib{Ha}{article}{
author={S. Hayashi},
title={Diffeomorphisms in $F^1(M)$ satisfy Axiom A},
journal={Ergodic Theory Dyn. Syst.},
volume={12},
pages={233--253},
year={1992}}

\bib{Ka1}{article}{
author={H. Kato},
title={Continuum-wise expansive homeomorphisms},
journal={Canad. J. Math.},
volume={45},
number={3},
year={1993},
pages={576--598}}

\bib{Ka2}{article}{
author={H. Kato},
title={Concerning continuum-wise fully expansive homeomorphisms of continua},
journal={Topology and its Applications},
volume={53},
year={1993},
pages={239--258}}

\bib{K}{article}{
author={M. Komuro}, 
title={Expansive properties of Lorenz
attractors}, journal={The Theory of dynamical systems and its
applications to nonlinear problems}, year={1984}, place={Kyoto},
pages={4--26}, 
publisher={World Sci. Singapure}}

\bib{LZ}{article}{
author={J. Li},
author={R. Zhang},
title={Levels of generalized expansiveness, preprint},
year={2015}}

\bib{Ma75}{incollection}{
booktitle={Dynamical Systems—Warwick 1974},
volume={468},
series={Lecture Notes in Mathematics},
editor={Manning, Anthony},
title={Expansive diffeomorphisms},
publisher={Springer Berlin Heidelberg},
author={R. Mañé},
pages={162-174},
year={1975}}

\bib{Mo2}{article}{
author={C. A. Morales},
title={Measure expansive systems},
journal={Preprint IMPA},
year={2011}}

\bib{Mo1}{article}{
author={C. A. Morales},
title={A generalization of expansivity},
journal={Discrete Contin. Dyn. Syst.},
volume={32},
year={2012}, 
number={1},
pages={293--301}}

\bib{MoSi}{book}{
author={C. A. Morales},
author={V. F. Sirvent},
title={Expansive measures},
publisher={IMPA},
year={2013},
series={29o Col\'oq. Bras. Mat.}}

\bib{MSS}{article}{
author={K. Moriyasu},
author={K. Sakai},
author={W. Sun},
title={$C^1$-stably expansive flows},
journal={Journal of Differential Equations},
volume={213},
year={2005},
pages={352--367}}

\bib{PaTa}{book}{
author={J. Palis},
author={F. Takens},
title={Hyperbolicity and Sensitive-Chaotic Dynamics at Homoclinic Bifurcations},
publisher={Cambridge University Press},
year={1993}}

\bib{Robinson}{book}{
author={C. Robinson},
title={Dynamical Systems: Stability, Symbolic Dynamics, and Chaos},
publisher={CRC Press},
year={1995}}

\bib{Sa97}{article}{
author={K. Sakai},
title={Continuum-wise expansive diffeomorphisms},
journal={Publicacions Matemàtiques},
volume={41},
year={1997},
pages={375--382}}


\bib{SSY}{article}{
author={K. Sakai},
author={N. Sumi},
author={K. Yamamoto},
title={Measure-expansive diffeomorphisms},
journal={J. Math. Anal. Appl.},
volume={414},
year={2014},
pages={546--552}}

\bib{Sh87}{book}{
author={M. Shub},
title={Global stability of dynamical systems},
publisher={Springer-Verlag},
year={1987}}

\bib{Sm70}{article}{
author={S. Smale},
title={The $\Omega$-Stability Theorem}, 
journal={Proc. Symp. Pure Math.}, 
volume={14},
pages={289--297}, 
year={1970}}

\end{biblist}
\end{bibdiv}

\end{document}